\providecommand{\U}[1]{\protect\rule{.1in}{.1in}}
\begin{document}

\journalname{ArXIV}
\title{A Criterion for the Viability of Stochastic Semilinear Control Systems via the
Quasi-Tangency Condition }
\author{Dan Goreac}
\institute{Universit\'{e} Paris-Est, Laboratoire d'Analyse et Math\'{e}matiques
Appliqu\'{e}es, UMR 8050, 5, Boulevard Descartes, Cit\'{e} Descartes -
Champs-sur-Marne, 77454 Marne-la-Vall\'{e}e, Cedex 2 \
\email{Dan.Goreac@univ-mlv.fr}%
}
\maketitle

\begin{abstract}
In this paper we study a criterion for the viability of stochastic semilinear
control systems on a real, separable Hilbert space. The necessary and
sufficient conditions are given using the notion of stochastic quasi-tangency.
As a consequence, we prove that approximate viability and the viability
property coincide for stochastic linear control systems. We obtain Nagumo's
stochastic theorem and we present a method allowing to provide explicit
criteria for the viability of smooth sets. We analyze the conditions
characterizing the viability of the unit ball. The paper generalizes recent
results from the deterministic framework.

\end{abstract}

\textbf{Keywords: }viability, stochastic control, semilinear control systems

\textbf{AMS Classification}: 60H15, 35R60, 93E20

\section{Introduction}

We begin by introducing the basic notations. We consider two separable real
Hilbert spaces $\left(  H,\left\langle \cdot,\cdot\right\rangle _{H}\right)
,$ $\left(  \Xi,\left\langle \cdot,\cdot\right\rangle _{\Xi}\right)  $. We let
$\mathcal{L}\left(  \Xi;H\right)  $ be the space of continuous linear
operators and $L_{2}\left(  \Xi;H\right)  $ be the space of Hilbert-Schmidt
linear operators endowed with its usual norm. We consider a linear operator
$A:D\left(  A\right)  \subset H\longrightarrow H$ which generates a $C_{0}%
$-semigroup of linear operators $\left(  S\left(  t\right)  \right)  _{t\geq
0}.$ We let $\left(  \Omega,\mathcal{F},P\right)  $ be a complete probability
space. The process $W$ will denote a cylindrical Wiener process with values in
$\Xi$. The probability space $\left(  \Omega,\mathcal{F},P\right)  $ is
endowed with the natural, complete filtration $\left(  \mathcal{F}_{t}\right)
_{t\geq0}$ generated by $W$. We consider $\left(  G,\left\langle \cdot
,\cdot\right\rangle _{G}\right)  $ a real separable Hilbert space and a
closed, bounded subset $U\subset G.$ For a finite time horizon $T>0$, we let
$\mathcal{A}$ denote the space of all predictable processes $u:\left[
0,T\right]  \times\Omega\longrightarrow U.$ The coefficient functions
$f:H\times U\longrightarrow H$ and $g:H\times U\longrightarrow\mathcal{L}%
\left(  \Xi;H\right)  $ will be supposed to satisfy standard assumptions (see
Section 2). Finally, we consider a closed set $K\subset H$.

Given a stochastic control system%
\begin{equation}
\left\{
\begin{array}
[c]{l}%
dX^{t,\xi,u}(s)=\left(  AX^{t,\xi,u}(s)+f\left(  X^{t,\xi,u}(s),u\left(
s\right)  \right)  \right)  ds\\
+g\left(  X^{t,\xi,u}(s),u\left(  s\right)  \right)  dW_{s},\text{ for all
}s\in\left[  t,T\right]  ,\\
X^{t,\xi,u}(t)=\xi\in L^{2}\left(  \Omega,\mathcal{F}_{t},P;H\right)  ,
\end{array}
\right.  \label{SDE1}%
\end{equation}
the aim of this paper is to give necessary and sufficient conditions for
which, for every $t\in\left[  0,T\right]  $, and every $\xi\in L^{2}\left(
\Omega,\mathcal{F}_{t},P;K\right)  $, one can find an admissible control
process $u\in\mathcal{A}$ such that the mild solution of (\ref{SDE1})
associated to $u$ remains inside the set $K,$ or, at least, in an arbitrarily
small neighborhood of $K$. These properties are called viability, respectively
$\varepsilon-$viability, and they have been extensively studied both in
deterministic and stochastic setting. In the finite-dimensional deterministic
framework, the first result on viability goes back to Nagumo \cite{N} and it
has been rediscovered several times in the late sixties. For stochastic
finite-dimensional systems, the methods used to characterize viability rely
either on stochastic contingent cones (e.g. \cite{A}-\cite{AF}, \cite{GT}) or
on viscosity solutions (e.g. \cite{BCQ}-\cite{BQR}, \cite{G}). \ We also
recall \cite{BQT} for a necessary condition for the viability of semilinear
evolution systems using viscosity solutions of a class of fully nonlinear
Hamilton-Jacobi-Bellman equations in abstract Hilbert spaces.

In \cite{DG2} it has been proved by duality methods that approximate
controllability of infinite-dimensional linear systems reduces to the study of
(backward) viability with respect to some dual system. Therefore, beside the
interest in the result itself, a necessary and sufficient criterion for
viability in the infinite-dimensional setting could be a tool for the study of
controllability properties.

Recently, the authors of \cite{CV}-\cite{CNV} have provided a characterization
of the viability of (deterministic) multi-valued nonlinear evolutions on
Banach spaces via the quasi-tangency condition. Motivated by this approach,
our main objective is to introduce the notion of quasi-tangency corresponding
to the stochastic framework and to prove the relation with the viability of
stochastic semilinear control systems. This will allow to extend existing
results (e.g. \cite{ADP}) in both directions: to an infinite-dimensional state
space and to systems driven by cylindrical Brownian motion. Under the
assumption that the noise coefficient $g$ is $L_{2}\left(  \Xi;H\right)
$-valued, we provide a criterion equivalent to the $\varepsilon$-viability
property. In the more general framework, this condition remains sufficient for
$\varepsilon$-viability. For the converse, the estimates only allow to prove a
slightly weaker quasi-tangency condition.

We point out that the stochastic quasi-tangency conditions extend the
stochastic contingent cone introduced in \cite{ADP} for the finite-dimensional
setting. As for \cite{ADP}, the criteria are not deterministic (as is the case
in \cite{BCQ}-\cite{BQR}, \cite{G}), but the method can be easily adapted for
random sets of constraints. One can derive deterministic conditions by
calculating contingent sets to direct images. We will give a simple example
showing how the viability of the unit ball can be explicitly characterized
from the quasi-tangency conditions.

The paper is organized as follows: In the first section, we introduce the
concept of quasi-tangency and state the main results. The second section is
concerned with the proof of the equivalence between stochastic quasi-tangency
and the property of $\varepsilon$-viability for stochastic semilinear control
systems. In the last section, two classes of examples are considered. First,
we prove that, for infinite-dimensional stochastic linear control systems,
$\varepsilon$-viability and viability coincide. Second, we give Nagumo's
stochastic theorem as a corollary of our main result and deduce explicit
conditions for the viability of smooth sets. In particular, we study the
viability of the unit ball in $H$.

\section{Assumptions and main results}

The coefficient functions $f:H\times U\longrightarrow H$ and $g:H\times
U\longrightarrow\mathcal{L}\left(  \Xi;H\right)  $ are supposed to satisfy the
following standard assumptions:

\textbf{(A1)} There exists some positive constant $c>0$ such that%
\begin{equation}
\left\{
\begin{array}
[c]{l}%
\left\vert f\left(  x,u\right)  -f\left(  y,u\right)  \right\vert \leq
c\left\vert x-y\right\vert ,\text{ and }\\
\left\vert f\left(  x,u\right)  \right\vert \leq c\left(  1+\left\vert
x\right\vert \right)  ,
\end{array}
\right.  \tag{A1}\label{A1}%
\end{equation}
for all $x,y\in H$ and all $u\in U;$

\textbf{(A2)} For every $t>0$ and every $\left(  x,u\right)  \in H\times U$,
one has $S\left(  t\right)  g\left(  x,u\right)  \in L_{2}\left(
\Xi;H\right)  .$ Moreover, for some constants $c>0$ and $0\leq\gamma<\frac
{1}{2},$
\begin{equation}
\left\{
\begin{array}
[c]{l}%
\left\vert g\left(  x,u\right)  \right\vert _{\mathcal{L}\left(  \Xi;H\right)
}\leq c\left(  1+\left\vert x\right\vert \right)  ,\\
\left\vert S\left(  t\right)  \left(  g\left(  x,u\right)  -g\left(
y,u\right)  \right)  \right\vert _{L_{2}\left(  \Xi;H\right)  }\leq c\left(
t\wedge1\right)  ^{-\gamma}\left\vert x-y\right\vert ,\text{ and}\\
\left\vert S\left(  t\right)  g\left(  x,u\right)  \right\vert _{L_{2}\left(
\Xi;H\right)  }\leq c\left(  t\wedge1\right)  ^{-\gamma}\left(  1+\left\vert
x\right\vert \right)  ,
\end{array}
\right.  \text{ } \tag{A2}\label{A2}%
\end{equation}
for all $x,y\in H$ and all $u\in U.$

Given $t\in\left[  0,T\right]  $ and an admissible control process
$u\in\mathcal{A}$, we recall that an $\left(  \mathcal{F}_{t}\right)
$-predictable process $X^{t,\xi,u}$ with $E\left[  \sup_{s\in\left[
t,T\right]  }\left\vert X^{t,\xi,u}\left(  s\right)  \right\vert ^{2}\right]
<\infty$ is a mild solution of (\ref{SDE1}) if, for all $s\in\left[
t,T\right]  ,$%
\begin{align*}
X^{t,\xi,u}\left(  s\right)   &  =S\left(  s-t\right)  \xi+\int_{t}%
^{s}S\left(  s-r\right)  f\left(  X^{t,\xi,u}\left(  r\right)  ,u\left(
r\right)  \right)  dr\\
&  +\int_{t}^{s}S\left(  s-r\right)  g\left(  X^{t,\xi,u}\left(  r\right)
,u\left(  r\right)  \right)  dW_{r},\text{ }dP-a.s.
\end{align*}
Under the standard assumptions (\ref{A1}) and (\ref{A2}), there exists a
unique mild solution of (\ref{SDE1}). For further results on mild solutions,
the reader is referred to \cite{DPZ} and \cite{FT}.

Let us consider $\lambda\in\left[  0,\frac{1}{2}\right)  $ and introduce the
concept of stochastic $\lambda$ quasi-tangency.

\begin{definition}
($\lambda$ Quasi-tangency condition) A closed set $K\subset H$ satisfies the
$\lambda$ quasi-tangency condition with respect to the control system
(\ref{SDE1}) if, for every $t\in\left[  0,T\right)  $ and every $\xi\in
L^{2}\left(  \Omega,\mathcal{F}_{t},P;K\right)  ,$ we have%
\begin{align}
\underset{h\searrow0}{\lim\inf}\inf &  \left\{  \frac{1}{h^{1-2\lambda}%
}E\left[  \left\vert \zeta-\eta\right\vert ^{2}\right]  +\frac{1}{h^{2}%
}E\left[  \left\vert E^{\mathcal{F}_{t}}\left[  \zeta-\eta\right]  \right\vert
^{2}\right]  :\right. \nonumber\\
&  \left.  \left(  \zeta,\eta\right)  \in\mathcal{S}\left(  t,h\right)
\xi\text{ }\times\text{ }L^{2}\left(  \Omega,\mathcal{F}_{t+h},P;K\right)
\right\}  =0, \label{QTC1}%
\end{align}
where%
\begin{align*}
\mathcal{S}\left(  t,h\right)  \xi=  &  \left\{  S\left(  h\right)  \xi
+\int_{t}^{t+h}S\left(  t+h-s\right)  f\left(  \xi,u(s)\right)  ds,\right. \\
&  \left.  +\int_{t}^{t+h}S\left(  t+h-s\right)  g\left(  \xi,u(s)\right)
dW_{s},\text{ }u\in\mathcal{A}\right\}  .
\end{align*}

\end{definition}

\begin{remark}
The term involving the conditional expectation in (\ref{QTC1}) corresponds to
the deterministic quasi-tangency condition. The term in $h^{1-2\lambda}$
(written for $\lambda=\gamma)$ is specific to the stochastic part of the
equation (\ref{SDE1}). Whenever the coefficient function $g$ takes its values
in $L_{2}\left(  \Xi;H\right)  $ and is Lipschitz continuous in the state
variable, one can consider $\lambda=0.$
\end{remark}

The following simple proposition provides a sequential formulation of the
stochastic quasi-tangency condition.

\begin{proposition}
A nonempty, closed set $K\subset H$ satisfies the $\lambda$ quasi-tangency
condition with respect to the control system (\ref{SDE1}) if and only if, for
every $t\in\left[  0,T\right)  $ and every $\xi\in L^{2}\left(  \Omega
,\mathcal{F}_{t},P;K\right)  $, there exist a sequence of positive real
constants $h_{n}\searrow0,$ a sequence of random variables $p_{n}\in
L^{2}\left(  \Omega,\mathcal{F}_{t+h_{n}},P;H\right)  $ and a sequence of
admissible control processes $\left(  u_{n}\right)  _{n}\subset\mathcal{A}$
such that the following assertions hold simultaneously:

(a) $\lim_{n}\left(  E\left[  \left\vert p_{n}\right\vert ^{2}\right]
+\frac{1}{h_{n}^{1+2\gamma}}E\left[  \left\vert E^{\mathcal{F}_{t}}\left[
p_{n}\right]  \right\vert ^{2}\right]  \right)  =0,$ and

(b) $S\left(  h_{n}\right)  \xi+\int_{t}^{t+h_{n}}S\left(  t+h_{n}-s\right)
f\left(  \xi,u_{n}(s)\right)  ds+$

$\int_{t}^{t+h_{n}}S\left(  t+h_{n}-s\right)  g\left(  \xi,u_{n}(s)\right)
dW_{s}+h_{n}^{\frac{1}{2}-\lambda}p_{n}\in K,$ $dP-$almost surely, for all
$n\geq1.$
\end{proposition}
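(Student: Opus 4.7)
The plan is to prove the equivalence by a direct change of variables, setting $\eta=\zeta+h^{\frac{1}{2}-\lambda}p$, which establishes a bijection between pairs $(\zeta,\eta)\in \mathcal{S}(t,h)\xi\times L^{2}(\Omega,\mathcal{F}_{t+h},P;K)$ used in (\ref{QTC1}) and triples $(u,p,h)$ appearing in the sequential formulation. The key algebraic identity is that, under this substitution, $\zeta-\eta=-h^{\frac{1}{2}-\lambda}p$, and since $h^{\frac{1}{2}-\lambda}$ is deterministic, $E^{\mathcal{F}_{t}}[\zeta-\eta]=-h^{\frac{1}{2}-\lambda}E^{\mathcal{F}_{t}}[p]$. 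Consequently,
\begin{equation*}
\frac{1}{h^{1-2\lambda}}E\bigl[|\zeta-\eta|^{2}\bigr]+\frac{1}{h^{2}}E\bigl[|E^{\mathcal{F}_{t}}[\zeta-\eta]|^{2}\bigr]=E\bigl[|p|^{2}\bigr]+\frac{1}{h^{1+2\lambda}}E\bigl[|E^{\mathcal{F}_{t}}[p]|^{2}\bigr],
\end{equation*}
so the two quantities agree term-by-term (with the notational convention $\lambda=\gamma$ understood between statement and definition).

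For the direct implication, I would assume (\ref{QTC1}). Because the inner infimum is nonnegative and its $\liminf$ as $h\searrow0$ equals $0$, I extract a sequence $h_{n}\searrow0$ along which $\inf\{\cdot\}\to 0$. For each $n$, I select a nearly minimising pair $(\zeta_{n},\eta_{n})\in \mathcal{S}(t,h_{n})\xi\times L^{2}(\Omega,\mathcal{F}_{t+h_{n}},P;K)$, say within $1/n$ of the infimum, so that the quantity in (\ref{QTC1}) evaluated at $(\zeta_{n},\eta_{n})$ tends to $0$. By definition of $\mathcal{S}(t,h_{n})\xi$, the element $\zeta_{n}$ comes with an admissible $u_{n}\in\mathcal{A}$. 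I then define $p_{n}:=h_{n}^{\lambda-\frac{1}{2}}(\eta_{n}-\zeta_{n})$; this is $\mathcal{F}_{t+h_{n}}$-measurable and lies in $L^{2}(\Omega,\mathcal{F}_{t+h_{n}},P;H)$ because both $\zeta_{n}$ and $\eta_{n}$ do. Property (b) holds by construction, since $\zeta_{n}+h_{n}^{\frac{1}{2}-\lambda}p_{n}=\eta_{n}\in K$ almost surely, and property (a) is exactly the identity above applied to $(\zeta_{n},\eta_{n})$.

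For the converse, I would start from a triple $(h_{n},p_{n},u_{n})$ satisfying (a) and (b). I set $\zeta_{n}$ equal to the sum of the three integrals appearing in (b), so $\zeta_{n}\in\mathcal{S}(t,h_{n})\xi$, and $\eta_{n}:=\zeta_{n}+h_{n}^{\frac{1}{2}-\lambda}p_{n}$, which by (b) belongs to $L^{2}(\Omega,\mathcal{F}_{t+h_{n}},P;K)$. The same algebraic identity then shows that the expression in the bracket of (\ref{QTC1}) evaluated at $(\zeta_{n},\eta_{n})$ equals the quantity in (a), and hence tends to $0$. Therefore the inner infimum in (\ref{QTC1}) at $h=h_{n}$ is bounded above by a null sequence, which forces the $\liminf$ as $h\searrow 0$ to vanish.

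The result is essentially a bookkeeping statement, so no genuine obstacle is expected. The only point that needs attention is verifying the $L^{2}$ integrability and $\mathcal{F}_{t+h_{n}}$-measurability of $p_{n}$ in the direct implication; both are immediate once one notes that $\zeta_{n}$ and $\eta_{n}$ share these properties and that $h_{n}^{\lambda-1/2}$ is a deterministic constant.
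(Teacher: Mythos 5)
Your argument is correct and is precisely the ``straightforward'' proof that the paper leaves to the reader: the substitution $p=h^{\lambda-\frac{1}{2}}(\eta-\zeta)$ converts the bracket in (\ref{QTC1}) term-by-term into the quantity in (a), and the two directions follow by selecting near-minimising pairs (for necessity) and by reading (b) as exhibiting an admissible pair $(\zeta_{n},\eta_{n})$ (for sufficiency), with the $\mathcal{F}_{t+h_{n}}$-measurability and $L^{2}$-integrability checks handled as you indicate. Your remark that the exponent $1+2\gamma$ in (a) should be read as $1+2\lambda$ for consistency with the definition is also the right reading of the paper's notation.
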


The proof is straightforward and we leave the details to the interested
reader. We recall the definitions of $\varepsilon-$viability and viability.

\begin{definition}
(a) A nonempty, closed set $K\subset H$ is called (mild) viable with respect
to the control system (\ref{SDE1}) if, for every $t\in\left[  0,T\right]  $
and every initial condition $\xi\in L^{2}\left(  \Omega,\mathcal{F}%
_{t},P;K\right)  ,$ there exists an admissible control process $u$ such that
\[
X^{t,\xi,u}\left(  s\right)  \in K,\text{ }dP\text{-a.s., for all }s\in\left[
t,T\right]  .
\]

(b) A nonempty, closed set $K\subset H$ is called (mild) $\varepsilon-$viable
with respect to the control system (\ref{SDE1}) if, for every $t\in\left[
0,T\right]  $ and every initial condition $\xi\in L^{2}\left(  \Omega
,\mathcal{F}_{t},P;K\right)  ,$
\[
\inf_{u\in\mathcal{A}}\sup_{s\in\left[  t,T\right]  }d\left(  X^{t,\xi
,u}\left(  s\right)  ,L^{2}\left(  \Omega,\mathcal{F}_{s},P;K\right)  \right)
=0.
\]

\end{definition}

\begin{remark}
In the definition of $\varepsilon-$viability, the set $L^{2}\left(
\Omega,\mathcal{F}_{s},P;K\right)  $ should be seen as a closed subset of
$L^{2}\left(  \Omega,\mathcal{F}_{s},P;H\right)  $ and $d\left(  \cdot
,L^{2}\left(  \Omega,\mathcal{F}_{s},P;K\right)  \right)  $ as the usual
distance to a closed set in the Hilbert space $L^{2}\left(  \Omega
,\mathcal{F}_{s},P;H\right)  ,$ i.e.
\[
d^{2}\left(  \zeta,L^{2}\left(  \Omega,\mathcal{F}_{s},P;K\right)  \right)
=\inf\left\{  E\left[  \left\vert \zeta-\eta\right\vert ^{2}\right]  :\eta\in
L^{2}\left(  \Omega,\mathcal{F}_{s},P;K\right)  \right\}  ,
\]
for every $\zeta\in L^{2}\left(  \Omega,\mathcal{F}_{s},P;H\right)  .$
Whenever the projection map $\Pi_{K}:H\rightsquigarrow K$ defined by
\[
\Pi_{K}(x)=\left\{  y\in K:d_{K}(x)=\left\vert y-x\right\vert \right\}
\]
has nonempty images (e.g. for closed, convex sets of constraints), using
\cite{AF}, Corollary 8.2.13, one can replace the $\varepsilon$-viability
condition by
\[
\inf_{u\in\mathcal{A}}\sup_{s\in\left[  t,T\right]  }E\left[  d_{K}^{2}\left(
X^{t,\xi,u}\left(  s\right)  \right)  \right]  =0.
\]

\end{remark}

We now state the main results of the paper. The proofs will be postponed to
the next section.

\begin{theorem}
(Necessary condition for $\varepsilon$-viability)

Let us suppose that (\ref{A1}) and (\ref{A2}) hold true. Moreover, we suppose
that $K\subset H$ is a nonempty, closed set. If $K$ is $\varepsilon-$viable
with respect to the control system (\ref{SDE1}), then the quasi-tangency
condition (\ref{QTC1}) holds true for $\lambda=\gamma$.
\end{theorem}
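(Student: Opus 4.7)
The plan is to fix $t\in[0,T)$ and $\xi\in L^{2}(\Omega,\mathcal{F}_t,P;K)$ and, for each small $h>0$, construct a witness pair $(\zeta_h,\eta_h)\in\mathcal{S}(t,h)\xi\times L^{2}(\Omega,\mathcal{F}_{t+h},P;K)$ realizing a small value of the infimum in (\ref{QTC1}). Introducing a parameter $\varepsilon(h)>0$ to be tuned later, $\varepsilon$-viability produces a control $u^{h}\in\mathcal{A}$ with
\[
\sup_{s\in[t,T]} d\left(X^{t,\xi,u^h}(s),L^{2}(\Omega,\mathcal{F}_s,P;K)\right)<\varepsilon(h),
\]
so I can pick $\eta_h\in L^{2}(\Omega,\mathcal{F}_{t+h},P;K)$ with $E|X^{t,\xi,u^h}(t+h)-\eta_h|^{2}\le 2\varepsilon(h)^{2}$. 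For $\zeta_h$ I take the \emph{frozen} element of $\mathcal{S}(t,h)\xi$ built from the same control $u^{h}$,
\[
\zeta_h = S(h)\xi + \int_t^{t+h} S(t+h-s) f(\xi,u^{h}(s))\,ds + \int_t^{t+h} S(t+h-s) g(\xi,u^{h}(s))\,dW_s.
\]

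The heart of the argument is the short-time error estimate
\[
E\bigl|X^{t,\xi,u^h}(t+h)-\zeta_h\bigr|^{2}=o(h^{1-2\gamma})\quad\text{and}\quad E\bigl|E^{\mathcal{F}_t}[X^{t,\xi,u^h}(t+h)-\zeta_h]\bigr|^{2}=o(h^{2}),
\]
with constants uniform in $u^{h}\in\mathcal{A}$. Writing $X=X^{t,\xi,u^h}$ and subtracting the two mild formulas, the $S(h)\xi$ terms cancel, leaving a drift integral and a stochastic integral whose integrands are $S(t+h-s)[f(X(s),u^{h}(s))-f(\xi,u^{h}(s))]$ and $S(t+h-s)[g(X(s),u^{h}(s))-g(\xi,u^{h}(s))]$. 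Cauchy--Schwarz together with (\ref{A1}) bounds the drift piece by $Ch\int_t^{t+h}E|X(s)-\xi|^{2}\,ds$, while the It\^{o} isometry together with (\ref{A2}) bounds the stochastic piece by $c^{2}\int_t^{t+h}(t+h-s)^{-2\gamma}E|X(s)-\xi|^{2}\,ds$. A standard a priori estimate for the mild solution, applied to the decomposition $X(s)-\xi=(S(s-t)-I)\xi+\int_t^{s}S(s-r)f\,dr+\int_t^{s}S(s-r)g\,dW_r$, yields $E|X(s)-\xi|^{2}\le E|(S(s-t)-I)\xi|^{2}+C(s-t)^{1-2\gamma}$, where the semigroup term is a deterministic $o(1)$ as $s\searrow t$ by strong continuity of $S$ and dominated convergence. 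Plugging this in, the drift contribution is $h^{2} o(1)+O(h^{3-2\gamma})$ and the stochastic contribution is $h^{1-2\gamma} o(1)+O(h^{2-4\gamma})$, where the Beta-type integral $\int_0^h(h-r)^{-2\gamma}r^{1-2\gamma}\,dr$ is finite precisely because $\gamma<1/2$; both are $o(h^{1-2\gamma})$. For the conditional estimate, the stochastic integral drops out as it has $\mathcal{F}_t$-conditional mean zero, and the drift part is controlled via conditional Jensen by $h^{2}o(1)+O(h^{3-2\gamma})=o(h^{2})$.

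Combining the error estimate with the $\varepsilon$-viability bound, the triangle inequality yields
\[
E|\zeta_h-\eta_h|^{2}\le 2 E|\zeta_h-X(t+h)|^{2}+2 E|X(t+h)-\eta_h|^{2}=o(h^{1-2\gamma})+4\varepsilon(h)^{2},
\]
together with an analogous inequality for $E|E^{\mathcal{F}_t}[\zeta_h-\eta_h]|^{2}$ obtained from $|E^{\mathcal{F}_t}Z|^{2}\le E^{\mathcal{F}_t}|Z|^{2}$. Choosing $\varepsilon(h)=h^{2}$ (any $o(h)$ would do) and dividing by $h^{1-2\gamma}$ and $h^{2}$ respectively, both quotients tend to $0$ along any sequence $h_n\searrow 0$, so the liminf in (\ref{QTC1}) vanishes for every admissible $\xi$. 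The main technical obstacle is controlling the singular kernel $(t+h-s)^{-\gamma}$ in the It\^{o} bound: it is precisely what produces the scale $h^{1-2\gamma}$ for the stochastic error, forcing the stochastic-specific term in (\ref{QTC1}) and pinning the parameter $\lambda$ to $\gamma$.
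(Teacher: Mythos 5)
Your proposal is correct and follows essentially the same route as the paper: use $\varepsilon$-viability with an $h$-dependent tolerance to pick $\eta_h$ near $X^{t,\xi,u^h}(t+h)$, take the frozen element of $\mathcal{S}(t,h)\xi$ built from the same control as $\zeta_h$, and control $X(t+h)-\zeta_h$ via the short-time bound $E|X(s)-\xi|^{2}\le C\bigl(\sup_{r\le h}E|S(r)\xi-\xi|^{2}+h^{1-2\gamma}\bigr)$ together with (A1), (A2), the It\^{o} isometry, and the vanishing of the stochastic integral's $\mathcal{F}_t$-conditional mean. The only cosmetic difference is your choice of tolerance $\varepsilon(h)=h^{2}$ versus the paper's $h^{3}$, which plays no role.
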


\begin{theorem}
(Sufficient condition for $\varepsilon$-viability)

Let us suppose that (\ref{A1}) and (\ref{A2}) hold true. Moreover, we suppose
that $K\subset H$ is a nonempty, closed set and the quasi-tangency condition
(\ref{QTC1}) holds true for $\lambda=0$. Then the set $K$ is $\varepsilon
-$viable with respect to the control system (\ref{SDE1}).
\end{theorem}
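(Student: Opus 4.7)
The plan is an Euler-type construction: given $\varepsilon > 0$ and initial data $(t, \xi)$ with $\xi \in L^{2}(\Omega, \mathcal{F}_{t}, P; K)$, I would build inductively a grid $t = t_0 < t_1 < \cdots$ in $[t, T]$, admissible controls $u_k$ on $[t_k, t_{k+1}]$, states $\xi_k \in L^{2}(\Omega, \mathcal{F}_{t_k}, P; K)$, and perturbations $p_k$ furnished by the sequential quasi-tangency formulation with $\lambda = 0$, so that
\[
\xi_{k+1} := S(h_k)\xi_k + \int_{t_k}^{t_{k+1}} S(t_{k+1} - s) f(\xi_k, u_k(s))\, ds + \int_{t_k}^{t_{k+1}} S(t_{k+1} - s) g(\xi_k, u_k(s))\, dW_s + h_k^{1/2} p_k
\]
lands in $L^{2}(\Omega, \mathcal{F}_{t_{k+1}}, P; K)$, while $E[|p_k|^{2}]$ and $h_k^{-(1+2\gamma)} E[|E^{\mathcal{F}_{t_k}}[p_k]|^{2}]$ are smaller than a prescribed tolerance $\delta_k \to 0$. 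Concatenating the $u_k$ yields a control $u^{\varepsilon} \in \mathcal{A}$ and a piecewise ``jumping'' process $\widetilde{X}$ which coincides with the mild solution of (\ref{SDE1}) driven by $u^{\varepsilon}$ between grid points and takes the value $\xi_k \in K$ at each $t_k$.

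Next, I would employ a Zorn/maximality argument to extend the construction to the whole interval $[t, T]$. Consider the family of such partial Euler schemes ordered by prolongation, and let $T^{*} \in (t, T]$ be the supremum of their lengths. Using the Cauchy estimate obtained from the error analysis sketched below, the $\xi_k$ along any $t_k \uparrow T^{*}$ converge in $L^{2}$ to some $\xi_{\infty} \in L^{2}(\Omega, \mathcal{F}_{T^{*}}, P; K)$. If $T^{*} < T$, applying the quasi-tangency condition at $(T^{*}, \xi_{\infty})$ produces a further extension, contradicting maximality; hence $T^{*} = T$.

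The technical core is to establish
\[
\sup_{s \in [t, T]} E\bigl[|X^{t, \xi, u^{\varepsilon}}(s) - \widetilde{X}(s)|^{2}\bigr] \le C \varepsilon^{2},
\]
which, combined with $\widetilde{X}(t_k) \in L^{2}(\Omega, \mathcal{F}_{t_k}, P; K)$ and a standard between-grid mild-solution estimate via (\ref{A1})--(\ref{A2}), gives $\varepsilon$-viability. The error at $t_{k+1}$ is the sum of the previous error propagated by the mild dynamics (handled by (\ref{A1}) and Gronwall, together with the Lipschitz part of (\ref{A2}) for the stochastic convolution) and the jump $h_k^{1/2} p_k$. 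I would split each jump into its $\mathcal{F}_{t_k}$-conditional mean $h_k^{1/2} E^{\mathcal{F}_{t_k}}[p_k]$ and its martingale residual: the conditional-mean part accumulates as a Bochner-type integral, which only the sharper bound on $h_k^{-(1+2\gamma)} E[|E^{\mathcal{F}_{t_k}}[p_k]|^{2}]$ (the $h^{2}$-scale in (\ref{QTC1})) is strong enough to absorb after integrating against the $(t \wedge 1)^{-\gamma}$ singularity of $S(\cdot) g$ from (\ref{A2}); the martingale residual is controlled via the It\^o isometry and the $L_{2}(\Xi; H)$-bound in (\ref{A2}).

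The main obstacle, as indicated above, is precisely this control of the stochastic forcing through the semigroup convolution. The exponent $\gamma < 1/2$ in (\ref{A2}) is what makes $\int_{0}^{h} (s \wedge 1)^{-2\gamma}\, ds$ finite and compatible with a discrete Gronwall iteration, and the two-scale quasi-tangency condition with its disparate normalizations $h^{1 - 2\lambda}$ and $h^{2}$ is engineered so that both the conditional-mean and the martingale components of the jump error fit into this iteration. Letting the tolerances $\delta_k \to 0$ then closes the proof.
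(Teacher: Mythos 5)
Your proposal follows essentially the same route as the paper: the paper formalizes your Euler scheme as an ``$\varepsilon$-approximate mild solution'' (the sixtuple $\left(\sigma,u,\varphi,\psi,\theta,Y\right)$, where $\sigma$ encodes your grid and $\varphi,\psi$ are exactly the conditional-mean and martingale-representation parts of the perturbations $p_k$ smeared over each step), replaces your Zorn argument by the equivalent Brezis--Browder ordering principle to reach $\widetilde{T}=T$, and closes with the same Gronwall comparison between $Y$ and the true mild solution through the $(s-r)^{-2\gamma}$ singularity. The only cosmetic discrepancy is your normalization $h_k^{-(1+2\gamma)}$ on the conditional mean, where the $\lambda=0$ condition actually used in the construction is the $h_k^{-1}$ scale; this does not affect the argument.
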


The following result is a direct consequence of the previous theorems. It
provides a useful criterion equivalent to the $\varepsilon$-viability of
control systems with $L_{2}\left(  \Xi;H\right)  $-valued noise coefficient.

\begin{corollary}
We assume (\ref{A1}) and

\textbf{(A2') }For every $\left(  x,u\right)  \in H\times U$, one has
$g\left(  x,u\right)  \in L_{2}\left(  \Xi;H\right)  .$ Moreover, for some
real constant $c>0,$
\begin{equation}
\left\{
\begin{array}
[c]{l}%
\left\vert g\left(  x,u\right)  -g\left(  y,u\right)  \right\vert
_{L_{2}\left(  \Xi;H\right)  }\leq c\left\vert x-y\right\vert ,\text{ and}\\
\left\vert g\left(  x,u\right)  \right\vert _{L_{2}\left(  \Xi;H\right)  }\leq
c\left(  1+\left\vert x\right\vert \right)  ,
\end{array}
\right.  \text{ } \tag{A2'}\label{A2'}%
\end{equation}
for all $x,y\in H$ and all $u\in U.$

Then the quasi-tangency condition (\ref{QTC1}) with $\lambda=0$ provides a
criterion equivalent to the $\varepsilon$-viability property.
\end{corollary}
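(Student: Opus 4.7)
The plan is to obtain the corollary by a direct concatenation of the two main theorems, after observing that assumption (\ref{A2'}) is simply the $\gamma = 0$ specialization of (\ref{A2}). The first step is therefore to verify this reduction. Since the Hilbert-Schmidt norm dominates the operator norm on $\mathcal{L}(\Xi;H)$, the sublinear-growth estimate $\left\vert g(x,u)\right\vert _{\mathcal{L}(\Xi;H)}\leq c(1+\left\vert x\right\vert )$ is inherited immediately from its $L_{2}(\Xi;H)$-counterpart in (\ref{A2'}). For the remaining two bounds in (\ref{A2}), I use uniform boundedness of the $C_{0}$-semigroup on $[0,T]$ by some constant $M>0$; composing with $S(t)$ then yields $\left\vert S(t)\left( g(x,u)-g(y,u)\right) \right\vert _{L_{2}(\Xi;H)}\leq Mc\left\vert x-y\right\vert$ and $\left\vert S(t)g(x,u)\right\vert _{L_{2}(\Xi;H)}\leq Mc(1+\left\vert x\right\vert )$. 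Up to enlarging the constant $c$ to $Mc$, these match (\ref{A2}) with $\gamma = 0$, since the prefactor $(t\wedge 1)^{-\gamma}$ equals $1$.

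With (\ref{A2'}) identified as the $\gamma = 0$ case of (\ref{A2}), both implications follow. For the direct implication, if $K$ is $\varepsilon$-viable with respect to (\ref{SDE1}), the Necessary Condition theorem applied with $\gamma = 0$ produces the quasi-tangency condition (\ref{QTC1}) with $\lambda = \gamma = 0$. For the converse, if (\ref{QTC1}) holds with $\lambda = 0$, the Sufficient Condition theorem directly yields the $\varepsilon$-viability of $K$. I do not expect any genuine obstacle in this argument — the entire content consists in matching the exponent $\lambda$ in the quasi-tangency condition to the exponent $\gamma$ in the growth hypothesis on $g$, and this matching becomes trivial precisely in the regime of (\ref{A2'}), namely when $g$ is $L_{2}(\Xi;H)$-valued and Lipschitz continuous in the state variable. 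The only bookkeeping point is to make sure the constants produced above remain uniform on $[0,T]$, which is immediate from the standard bound on the $C_{0}$-semigroup.
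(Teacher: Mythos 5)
Your proposal is correct and follows exactly the route the paper intends: the paper states the corollary is a direct consequence of Theorems 1 and 2, and your careful verification that (\ref{A2'}) is the $\gamma=0$ specialization of (\ref{A2}) (via domination of the operator norm by the Hilbert--Schmidt norm and uniform boundedness of the semigroup on $[0,T]$) is precisely the bookkeeping that makes that concatenation legitimate. No gaps.
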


\section{Proof of the main results}

\subsection{Necessary condition for $\varepsilon$-viability}

\begin{proof}
(of Theorem 1). We begin by proving that, whenever $K$ enjoys the
$\varepsilon$-viability property, it satisfies the quasi-tangency condition
with $\lambda=\gamma$. We consider arbitrary $t\in\left[  0,T\right)  ,$
$h\in\left(  0,1\right)  $ small enough and $\xi\in L^{2}\left(
\Omega,\mathcal{F}_{t},P;K\right)  $. If $K$ enjoys the $\varepsilon
$-viability property, then there exists an admissible control process $u$
(which may depend on $t,$ $h$ and $\xi)$ such that the mild solution of
(\ref{SDE1}) issued from $\xi$ and associated to $u$ (denoted by $X^{t,\xi,u}%
$) satisfies%
\[
d\left(  X^{t,\xi,u}\left(  s\right)  ,L^{2}\left(  \Omega,\mathcal{F}%
_{s},P;K\right)  \right)  <h^{3}\text{ for all }s\in\left[  t,T\right]  .
\]
In particular, there exists a random variable $\eta\in L^{2}\left(
\Omega,\mathcal{F}_{t+h},P;K\right)  $ such that
\begin{equation}
E\left[  \left\vert X^{t,\xi,u}\left(  t+h\right)  -\eta\right\vert
^{2}\right]  <h^{3}. \label{In0}%
\end{equation}
Using the continuity property of the mild solution (see, for example
\cite{DPZ} proof of Theorem 9.9.1), we get%
\begin{equation}
\sup_{s\in\left[  t,t+h\right]  }E\left[  \left\vert X^{t,\xi,u}\left(
s\right)  -\xi\right\vert ^{2}\right]  \leq C\left(  \sup_{r\in\left[
0,h\right]  }E\left[  \left\vert S\left(  r\right)  \xi-\xi\right\vert
^{2}\right]  +h^{1-2\gamma}\right)  , \label{In1}%
\end{equation}
where $C$ is a generic constant which may change from one line to another. $C$
depends on the Lipschitz constants of the coefficients, the initial data $\xi$
and the time horizon $T>0$ (but not on $h$). Using the assumptions (A1) and
(A2) and the inequality (\ref{In1}), one gets%
\begin{align}
&  \sup_{s\in\left[  t,t+h\right]  }E\left[  \left\vert f\left(  X^{t,\xi
,u}\left(  s\right)  ,u\left(  s\right)  \right)  -f\left(  \xi,u(s)\right)
\right\vert ^{2}\right] \nonumber\\
&  \leq C\left(  \sup_{r\in\left[  0,h\right]  }E\left[  \left\vert S\left(
r\right)  \xi-\xi\right\vert ^{2}\right]  +h^{1-2\gamma}\right)  ,
\label{In10}%
\end{align}
and%
\begin{align}
&  E\left[  \left\vert S\left(  t+h-s\right)  \left(  g\left(  X^{t,\xi
,u}\left(  s\right)  ,u\left(  s\right)  \right)  -g\left(  \xi,u(s)\right)
\right)  \right\vert ^{2}\right] \nonumber\\
&  \leq C\left(  t+h-s\right)  ^{-2\gamma}\left(  \sup_{r\in\left[
0,h\right]  }E\left[  \left\vert S\left(  r\right)  \xi-\xi\right\vert
^{2}\right]  +h^{1-2\gamma}\right)  , \label{In10'}%
\end{align}
for all $s\in\left[  t,t+h\right]  $. Let us now introduce
\begin{align}
q_{h}=  &  \eta-S\left(  h\right)  \xi-\int_{t}^{t+h}S\left(  t+h-s\right)
f\left(  \xi,u(s)\right)  ds\nonumber\\
&  -\int_{t}^{t+h}S\left(  t+h-s\right)  g\left(  \xi,u(s)\right)  dW_{s}.
\label{qh}%
\end{align}
Combining (\ref{In10}), (\ref{In10'}) and (\ref{In0}) yields
\begin{align}
&  E\left[  \left\vert q_{h}\right\vert ^{2}\right] \nonumber\\
&  \leq C\left(  E\left[  \left\vert X^{t,\xi,u}\left(  t+h\right)
-\eta\right\vert ^{2}\right]  \right. \nonumber\\
&  \text{ \ \ }+E\left[  \left\vert \int_{t}^{t+h}S\left(  t+h-s\right)
\left[  f\left(  X^{t,\xi,u}(s),u(s)\right)  -f\left(  \xi,u(s)\right)
\right]  ds\right\vert ^{2}\right] \nonumber\\
&  \text{ \ \ }\left.  +E\left[  \left\vert \int_{t}^{t+h}S\left(
t+h-s\right)  \left[  g\left(  X^{t,\xi,u}(s),u(s)\right)  -g\left(
\xi,u(s)\right)  \right]  dW_{s}\right\vert ^{2}\right]  \right) \nonumber\\
&  \leq Ch^{1-2\gamma}\left(  \sup_{r\in\left[  0,h\right]  }E\left[
\left\vert S\left(  r\right)  \xi-\xi\right\vert ^{2}\right]  +h^{1-2\gamma
}\right)  . \label{conda}%
\end{align}
Next, we notice that
\[
E^{\mathcal{F}_{t}}\left[  q_{h}\right]  =E^{\mathcal{F}_{t}}\left[
\eta\right]  -S\left(  h\right)  \xi-\int_{t}^{t+h}S\left(  t+h_{n}-s\right)
f\left(  \xi,u(s)\right)  ds.
\]
Thus, using Jensen's inequality and (\ref{In10}), we get%
\begin{equation}
E\left[  \left\vert E^{\mathcal{F}_{t}}\left[  q_{h}\right]  \right\vert
^{2}\right]  \leq Ch^{2}\left(  h^{1-2\gamma}+\sup_{r\in\left[  0,h\right]
}E\left[  \left\vert S\left(  r\right)  \xi-\xi\right\vert ^{2}\right]
\right)  . \label{condb}%
\end{equation}
We introduce the $\mathcal{F}_{t+h}$-measurable random variable%
\begin{equation}
p_{h}=h^{\gamma-\frac{1}{2}}q_{h}. \label{ph}%
\end{equation}
The inequalities (\ref{conda}) and (\ref{condb}) imply%
\begin{equation}
E\left[  \left\vert p_{h}\right\vert ^{2}\right]  +\frac{1}{h^{1+2\gamma}%
}E\left[  \left\vert E^{\mathcal{F}_{t}}\left[  p_{h}\right]  \right\vert
^{2}\right]  \leq C\left(  \sup_{r\in\left[  0,h\right]  }E\left[  \left\vert
S\left(  r\right)  \xi-\xi\right\vert ^{2}\right]  +h^{1-2\gamma}\right)  .
\label{condp}%
\end{equation}
Using the strong continuity of $\left(  S(r)\right)  _{r\geq0}$ and a
dominated convergence argument, we get%
\[
\lim_{h\rightarrow0+}\left(  E\left[  \left\vert p_{h}\right\vert ^{2}\right]
+\frac{1}{h^{1+2\gamma}}E\left[  \left\vert E^{\mathcal{F}_{t}}\left[
p_{h}\right]  \right\vert ^{2}\right]  \right)  =0.
\]
Also, by the choice of $\eta$ and $p_{h},$
\begin{align*}
&  S\left(  h\right)  \xi+\int_{t}^{t+h}S\left(  t+h-s\right)  f\left(
\xi,u(s)\right)  ds\\
&  +\int_{t}^{t+h}S\left(  t+h-s\right)  g\left(  \xi,u(s)\right)
dW_{s}+h^{\left(  \frac{1}{2}-\gamma\right)  }p_{h}=\eta\in K,
\end{align*}
$dP-$almost surely$.$ The proof is now complete.
\end{proof}

\begin{remark}
1. A careful look at the previous proof shows that $\lim\inf$ in the
definition of quasi-tangency can be strengthen to $\lim\sup.$

2. Moreover, for every deterministic initial data $\xi=x\in K$ and every
$k\geq2,$ the random variable $p_{h}$ (given by (\ref{ph})) can be chosen in
$L^{k}\left(  \Omega,\mathcal{F}_{t+h},P;H\right)  $ whenever one of the
following assumptions holds true:

a) the set of constraints $K$ is viable with respect to (\ref{SDE1});

b) the set of constraints $K$ is convex and $\varepsilon$-viable;

c) the set $K$ is bounded and $\varepsilon$-viable.

Indeed, if a) holds true, then the random variable $\eta$ in (\ref{In0}) can
be chosen as $\eta=X_{t+h}^{t,x,u}.$ Under the assumption b), one can choose
$\eta=\pi_{K}\left(  X_{t+h}^{t,x,u}\right)  $, where $\pi_{K}$ is the
projection on $K$. If c) holds true, $\eta\in L^{\infty}\left(  \Omega
,\mathcal{F}_{t+h},P;H\right)  .$ The conclusion follows from (\ref{qh}) and
(\ref{ph}).
\end{remark}

\subsection{Sufficient condition for the $\varepsilon$-viability property}

In order to prove the converse, we introduce the notion of $\varepsilon
$-approximate mild solution.

\begin{definition}
For every $0\leq t\leq\widetilde{T}\leq T$, every initial condition $\xi\in
L^{2}\left(  \Omega,\mathcal{F}_{t},P;K\right)  $ and every positive real
constant $\varepsilon,$ an $\varepsilon$-approximate mild solution of
(\ref{SDE1}) defined on $\left[  t,\widetilde{T}\right]  $ is a sixtuple
$\left(  \sigma,u,\varphi,\psi,\theta,Y\right)  $ such that

(a) the function $\sigma:\left[  t,\widetilde{T}\right]  \longrightarrow
\left[  t,\widetilde{T}\right]  $ is non decreasing and such that
\[
s-\varepsilon\leq\sigma\left(  s\right)  \leq s,\text{ for all }s\in\left[
t,\widetilde{T}\right]  .
\]

(b) the process $u$ is an admissible control process.

(c) the process $\varphi:\left[  t,\widetilde{T}\right]  \longrightarrow H$ is
predictable and such that
\[
E\left[  \int_{t}^{\widetilde{T}}\left\vert \varphi\left(  s\right)
\right\vert ^{2}ds\right]  \leq\left(  \widetilde{T}-t\right)  \varepsilon.
\]

(d) the process $\psi:\left[  t,\widetilde{T}\right]  \longrightarrow
L_{2}\left(  \Xi;H\right)  $ is predictable and such that
\[
E\left[  \int_{t}^{\widetilde{T}}\left\vert \psi\left(  s\right)  \right\vert
_{L_{2}\left(  \Xi;H\right)  }^{2}ds\right]  \leq\left(  \widetilde
{T}-t\right)  \varepsilon.
\]

(e) the function $\theta:\left\{  \left(  s,r\right)  :t\leq r<s\leq
\widetilde{T}\right\}  \longrightarrow\left[  t,\widetilde{T}\right]  $ is
Lebesgue-measurable and it satisfies $\theta\left(  s,r\right)  \leq s-t$ and
$s\mapsto\theta\left(  s,r\right)  $ is nonexpansive on $\left(
r,\widetilde{T}\right]  $ (i.e. $\left\vert \theta\left(  s,r\right)
-\theta\left(  s^{\prime},r\right)  \right\vert \leq\left\vert s-s^{\prime
}\right\vert ,$ for every $s,s^{\prime}\in\left(  r,\widetilde{T}\right]  $).

(f) the process $Y:\left[  t,\widetilde{T}\right]  \longrightarrow H$ is
predictable. Moreover, the process $Y_{\sigma}:\left[  t,\widetilde{T}\right]
\longrightarrow H$ defined by
\[
Y_{\sigma}\left(  s\right)  =Y\left(  \sigma\left(  s\right)  \right)  ,\text{
for all }s\in\left[  t,\widetilde{T}\right]
\]
is predictable and
\begin{align*}
Y(s) &  =S\left(  s-t\right)  \xi+\int_{t}^{s}S\left(  s-r\right)  f\left(
Y\left(  \sigma\left(  r\right)  \right)  ,u\left(  r\right)  \right)  dr\\
\text{ \ \ \ \ } &  \text{\ \ \ }+\int_{t}^{s}S\left(  s-r\right)  g\left(
Y\left(  \sigma\left(  r\right)  \right)  ,u\left(  r\right)  \right)
dW_{r}\\
&  \text{ \ \ }+\int_{t}^{s}S\left(  \theta\left(  s,r\right)  \right)
\varphi\left(  r\right)  dr+\int_{t}^{s}S\left(  \theta\left(  s,r\right)
\right)  \psi\left(  r\right)  dW_{r},
\end{align*}
for all $s\in\left[  t,\widetilde{T}\right]  .$

(g) for every $s\in\left[  t,\widetilde{T}\right]  $, $Y\left(  \sigma\left(
s\right)  \right)  \in K,$ $dP$-almost surely and $Y\left(  \widetilde
{T}\right)  \in K,$ $dP$-almost surely. Moreover,%
\[
E\left[  \left\vert Y\left(  \sigma\left(  s\right)  \right)  -Y\left(
s\right)  \right\vert ^{2}\right]  \leq\varepsilon,\text{ for all }s\in\left[
t,\widetilde{T}\right]  \text{.}%
\]

\end{definition}

We begin by proving some qualitative properties of $\varepsilon$-approximate
mild solutions.

\begin{proposition}
We suppose that (\ref{A1}) and (\ref{A2}) hold true. If $t,\widetilde{T}%
\in\left[  0,T\right]  ,$ such that $t\leq\widetilde{T},$ $\xi\in L^{2}\left(
\Omega,\mathcal{F}_{t},P;K\right)  $, $\varepsilon\in\left(  0,1\right)  $ is
a positive real constant and $\left(  \sigma,u,\varphi,\psi,\theta,Y\right)  $
is an $\varepsilon$-approximate mild solution of (\ref{SDE1}) defined on
$\left[  t,\widetilde{T}\right]  $, then
\begin{equation}
\sup_{s\in\left[  t,\widetilde{T}\right]  }E\left[  \left\vert Y\left(
s\right)  \right\vert ^{2}\right]  \leq C.\label{In14}%
\end{equation}
Here $C$ is a positive real constant which only depends on $T$ and $\xi$ (but
not on $t,\widetilde{T},$ $\varepsilon$ nor on $\left(  \sigma,u,\varphi
,\psi,\theta,Y\right)  $)$.$
\end{proposition}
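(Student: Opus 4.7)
The plan is to bound $\phi(s):=E[|Y(s)|^{2}]$ by a Volterra-type integral inequality with a weakly singular kernel, and then invoke a generalized Gronwall lemma. Because $(S(r))_{r\geq 0}$ is a $C_{0}$-semigroup on $[0,T]$, there is a constant $M=M_{T}\geq 1$ with $|S(r)|_{\mathcal{L}(H)}\leq M$ for all $r\in[0,T]$, and consequently $|S(r)\Psi|_{L_{2}(\Xi;H)}\leq M|\Psi|_{L_{2}(\Xi;H)}$ for any Hilbert--Schmidt $\Psi$.

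First, I would apply the elementary inequality $(a_{1}+\cdots+a_{5})^{2}\leq 5(a_{1}^{2}+\cdots+a_{5}^{2})$ to the five summands appearing in property (f) of the definition of an $\varepsilon$-approximate mild solution, and treat each term separately. The deterministic piece $S(s-t)\xi$ contributes $M^{2}E[|\xi|^{2}]$. For the $f$-integral, Cauchy--Schwarz together with the sublinear growth in (\ref{A1}) gives
\begin{equation*}
E\Bigl[\Bigl|\int_{t}^{s}S(s-r)f(Y(\sigma(r)),u(r))\,dr\Bigr|^{2}\Bigr]\leq CT\int_{t}^{s}\bigl(1+E[|Y(\sigma(r))|^{2}]\bigr)\,dr.
\end{equation*}
For the stochastic $g$-integral, the It\^o isometry together with (\ref{A2}) yields
\begin{equation*}
E\Bigl[\Bigl|\int_{t}^{s}S(s-r)g(Y(\sigma(r)),u(r))\,dW_{r}\Bigr|^{2}\Bigr]\leq C\int_{t}^{s}(s-r)^{-2\gamma}\bigl(1+E[|Y(\sigma(r))|^{2}]\bigr)\,dr,
\end{equation*}
which is finite thanks to $2\gamma<1$. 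The two perturbation terms are controlled using $|S(\theta(s,r))|_{\mathcal{L}(H)}\leq M$, Cauchy--Schwarz (respectively It\^o isometry), and the bounds in (c)--(d):
\begin{equation*}
E\Bigl[\Bigl|\int_{t}^{s}S(\theta(s,r))\varphi(r)\,dr\Bigr|^{2}\Bigr]+E\Bigl[\Bigl|\int_{t}^{s}S(\theta(s,r))\psi(r)\,dW_{r}\Bigr|^{2}\Bigr]\leq CM^{2}(T+1)\varepsilon,
\end{equation*}
which is bounded by a constant independent of $\varepsilon\in(0,1)$.

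Next, I would use property (g) to replace $Y(\sigma(r))$ by $Y(r)$ up to an $\varepsilon$ error: $E[|Y(\sigma(r))|^{2}]\leq 2\varepsilon+2\phi(r)$. Collecting the five estimates produces a weakly singular Volterra inequality of the form
\begin{equation*}
\phi(s)\leq K_{1}+K_{2}\int_{t}^{s}\bigl(1+(s-r)^{-2\gamma}\bigr)\phi(r)\,dr,\qquad s\in[t,\widetilde{T}],
\end{equation*}
where the constants $K_{1},K_{2}$ depend only on $M$, $T$, the constants appearing in (\ref{A1})--(\ref{A2}), and $E[|\xi|^{2}]$ (the term $\varepsilon\in(0,1)$ being absorbed).

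Finally, since $2\gamma<1$, a classical singular Gronwall lemma (in Henry's form) yields a uniform bound $\phi(s)\leq C$ for all $s\in[t,\widetilde{T}]$, with $C$ depending only on $T$ and $\xi$. The main subtlety is the singular kernel $(s-r)^{-2\gamma}$, which forces us to rely on a generalized rather than the classical Gronwall inequality; the rest of the argument is a straightforward bookkeeping of the five summands together with the observation that property (g) makes the comparison between $Y(\sigma(r))$ and $Y(r)$ harmless.
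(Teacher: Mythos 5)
Your proposal is correct and follows essentially the same route as the paper: decompose $E[|Y(s)|^{2}]$ via the mild formulation in (f), estimate the drift and diffusion terms with (A1)--(A2) and Cauchy--Schwarz/the It\^o isometry, absorb the $\varphi,\psi$ perturbations via (c)--(d), use (g) to trade $Y(\sigma(r))$ for $Y(r)$ at an $O(\varepsilon)$ cost, and close with a singular Gronwall inequality. The only cosmetic difference is that the paper applies a weighted Cauchy--Schwarz to put the $f$-term under the same kernel $(s-r)^{-2\gamma}$, whereas you keep a regular kernel there and merge the two at the end, which is equivalent on a bounded interval.
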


\begin{proof}
Let us fix $s\in\left[  t,\widetilde{T}\right]  $. In order to prove
(\ref{In14}), one uses the conditions (c) and (d) in Definition 3 to have
\begin{align}
E\left[  \left\vert Y\left(  s\right)  \right\vert ^{2}\right]   &  \leq
C\left(  E\left[  \left\vert S\left(  s-t\right)  \xi\right\vert ^{2}\right]
\right.  \nonumber\\
&  \text{ \ \ }+E\left[  \left(  \int_{t}^{s}\left\vert S\left(  s-r\right)
f\left(  Y\left(  \sigma\left(  r\right)  \right)  ,u\left(  r\right)
\right)  \right\vert dr\right)  ^{2}\right]  \nonumber\\
&  \text{ \ \ }\left.  +E\left[  \int_{t}^{s}\left\vert S\left(  s-r\right)
g\left(  Y\left(  \sigma\left(  r\right)  ,u\left(  r\right)  \right)
\right)  \right\vert ^{2}dr\right]  +\varepsilon\right)  \nonumber\\
&  =C\left(  I_{1}+I_{2}+I_{3}+\varepsilon).\right.  \label{In14.1}%
\end{align}
To estimate $I_{1}$, we use the properties of the semigroup $\left(  S\left(
r\right)  \right)  _{0\leq r\leq T}$ and obtain
\begin{equation}
I_{1}\leq CE\left[  \left\vert \xi\right\vert ^{2}\right]  .\label{In14.2}%
\end{equation}
For $I_{2},$ we write%
\begin{align}
I_{2} &  \leq CE\left[  \left(  \int_{t}^{s}\left\vert f\left(  Y\left(
\sigma\left(  r\right)  \right)  ,u\left(  r\right)  \right)  \right\vert
dr\right)  ^{2}\right]  \nonumber\\
&  \leq C\left(  s-t\right)  ^{1+2\gamma}\int_{t}^{s}\left(  s-r\right)
^{-2\gamma}E\left[  \left\vert f\left(  Y\left(  r\right)  ,u\left(  r\right)
\right)  \right\vert ^{2}\right]  dr\nonumber\\
&  +C\int_{t}^{s}E\left[  \left\vert Y\left(  r\right)  -Y\left(
\sigma\left(  r\right)  \right)  \right\vert ^{2}\right]  dr,\label{In14.3}%
\end{align}
Using property (g) in Definition 3 and the assumption (\ref{A1}), the
inequality (\ref{In14.3}) yields
\begin{equation}
I_{2}\leq C\left(  \int_{t}^{s}\left(  s-r\right)  ^{-2\gamma}E\left[
\left\vert Y\left(  r\right)  \right\vert ^{2}\right]  dr+1\right)
.\label{In14.4}%
\end{equation}
Similar arguments allow to obtain%
\begin{equation}
I_{3}\leq C\left(  \int_{t}^{s}\left(  \left(  s-r\right)  \wedge1\right)
^{-2\gamma}E\left[  \left\vert Y\left(  r\right)  \right\vert ^{2}\right]
dr+1\right)  .\label{In14.5}%
\end{equation}
We substitute (\ref{In14.2}), (\ref{In14.4}) and (\ref{In14.5}) in
(\ref{In14.1}) to finally get
\[
E\left[  \left\vert Y\left(  s\right)  \right\vert ^{2}\right]  \leq C\left(
1+\int_{t}^{s}\left(  s-r\right)  ^{-2\gamma}E\left[  \left\vert Y\left(
r\right)  \right\vert ^{2}\right]  dr\right)  ,
\]
for all $s\in\left[  t,\widetilde{T}\right]  .$ The conclusion follows from a
variant of Gronwall's inequality.
\end{proof}

The following result proves further regularity properties of the $Y$ component
of an approximate mild solution.

\begin{proposition}
If $t,\widetilde{T}\in\left[  0,T\right]  ,$ such that $t\leq\widetilde{T},$
the initial condition $\xi\in L^{2}\left(  \Omega,\mathcal{F}_{t},P;K\right)
$, $\varepsilon\in\left(  0,1\right)  $ is a positive real constant and
$\left(  \sigma,u,\varphi,\psi,\theta,Y\right)  $ is an $\varepsilon
$-approximate mild solution of (\ref{SDE1}), then $Y$ is mean-square continuous.
\end{proposition}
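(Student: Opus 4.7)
The strategy is the standard one used for mild solutions in the $\gamma$-regularity setting (see, e.g., \cite{DPZ}, Theorem 9.15). I would fix $s, s' \in [t, \widetilde{T}]$ with $s < s'$ and decompose $Y(s') - Y(s)$ into nine pieces, corresponding to the $S(\cdot - t)\xi$ term and the four integrals in the definition of $Y$, where each integral on $[t, s']$ is split into its restriction to $[t, s]$ (where the integrand involves a difference of two semigroup terms) and its restriction to $[s, s']$ (a vanishingly small interval). I would then estimate each piece in $L^2(\Omega; H)$ and show it tends to $0$ as $s' - s \to 0$.

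For the term $(S(s'-t) - S(s-t))\xi$, strong continuity of $(S(r))_{r \geq 0}$ combined with dominated convergence (using $\xi \in L^2$) suffices. For the integrals on the short interval $[s, s']$, the assumptions (\ref{A1}) and (\ref{A2}), together with the uniform $L^2$ bound on $Y(\sigma(\cdot))$ from the previous proposition and the $L^2$ bounds on $\varphi, \psi$ from conditions (c)-(d) of Definition 3, yield contributions of order $(s'-s)^{1-2\gamma}$, which vanish since $\gamma < 1/2$. For the difference-of-semigroup term on $[t, s]$ applied to the drift, I would use the identity $S(s'-r) - S(s-r) = (S(s'-s) - I)S(s-r)$ together with strong continuity of the semigroup and dominated convergence against the integrable bound $C(1 + E[|Y(\sigma(r))|^2])$. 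For the correction integral involving $\varphi$ on $[t, s]$, the nonexpansive property $|\theta(s', r) - \theta(s, r)| \leq s' - s$ reduces the analysis to a similar semigroup-continuity argument.

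The main obstacle is the stochastic integral of the form $R = \int_t^s \bigl(S(s'-r) - S(s-r)\bigr) g(Y(\sigma(r)), u(r)) \, dW_r$ (and its analogue involving $\psi$). By the It\^o isometry,
\[
E[|R|^2] = \int_t^s E\bigl[\|(S(s'-s) - I)\, S(s-r)\, g(Y(\sigma(r)), u(r))\|_{L_2(\Xi;H)}^2\bigr]\, dr.
\]
Thanks to (\ref{A2}), the integrand is dominated by $C\,((s-r)\wedge 1)^{-2\gamma}(1 + E[|Y(\sigma(r))|^2])$, which is integrable on $[t,s]$ because $2\gamma < 1$ and $E[|Y(\sigma(r))|^2]$ is uniformly bounded. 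To invoke dominated convergence it remains to establish the pointwise fact that for any fixed $T \in L_2(\Xi; H)$ one has $\|(S(h) - I)\, T\|_{L_2(\Xi; H)} \to 0$ as $h \to 0+$. This follows from the strong continuity of the semigroup on $H$ by applying scalar dominated convergence to the series $\sum_n |(S(h) - I)\, T e_n|^2$ along an orthonormal basis of $\Xi$, using the uniform operator bound $\sup_{h \in [0,1]} \|S(h)\|_{\mathcal{L}(H)} < \infty$ to dominate term by term. The $\psi$-correction term is handled identically, replacing $S(s-r)$ by $S(\theta(s, r))$ and exploiting once again the nonexpansiveness of $\theta(\cdot, r)$.
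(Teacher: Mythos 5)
Your proposal is correct and follows essentially the same route as the paper: the same decomposition of $Y(s')-Y(s)$ into the semigroup term plus each integral split over $[t,s]$ and $[s,s']$, the short-interval pieces controlled via (\ref{A1})--(\ref{A2}) and Proposition 2 at order $(s'-s)^{1-2\gamma}$, and the long-interval pieces handled through the identity $S(s'-r)-S(s-r)=(S(s'-s)-I)S(s-r)$, the It\^{o} isometry, and dominated convergence. Your explicit verification that $\left\vert (S(h)-I)T\right\vert _{L_{2}\left( \Xi;H\right) }\rightarrow 0$ for fixed $T\in L_{2}\left( \Xi;H\right) $ spells out a step the paper leaves implicit, but it is the same argument.
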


\begin{proof}
We let $\left(  \sigma,u,\varphi,\psi,\theta,Y\right)  $ be an $\varepsilon
$-approximate mild solution of (\ref{SDE1}) defined on $\left[  t,\widetilde
{T}\right]  .$ Let us fix $s\in\left[  t,\widetilde{T}\right]  $. For every
$s\leq s^{\prime},$%
\begin{align}
&  E\left[  \left\vert Y\left(  s^{\prime}\right)  -Y\left(  s\right)
\right\vert ^{2}\right]  \nonumber\\
&  \leq C\left(  E\left[  \left\vert S(s^{\prime}-s)\xi-\xi\right\vert
^{2}\right]  \right.  \nonumber\\
&  \text{ \ \ }+E\left[  \left\vert \int_{s}^{s^{\prime}}S\left(  s^{\prime
}-r\right)  f\left(  Y\left(  \sigma\left(  r\right)  \right)  ,u\left(
r\right)  \right)  dr\right\vert ^{2}\right]  \nonumber\\
&  \text{ \ \ }+E\left[  \left\vert \int_{t}^{s}\left(  S\left(  s^{\prime
}-r\right)  -S\left(  s-r\right)  \right)  f\left(  Y\left(  \sigma\left(
r\right)  \right)  ,u\left(  r\right)  \right)  dr\right\vert ^{2}\right]
\nonumber\\
&  \text{ \ \ }+E\left[  \left\vert \int_{s}^{s^{\prime}}S\left(  s^{\prime
}-r\right)  g\left(  Y\left(  \sigma\left(  r\right)  \right)  ,u\left(
r\right)  \right)  dW_{r}\right\vert ^{2}\right]  \nonumber\\
&  \text{ \ \ }+E\left[  \left\vert \int_{t}^{s}\left(  S\left(  s^{\prime
}-r\right)  -S\left(  s-r\right)  \right)  g\left(  Y\left(  \sigma\left(
r\right)  \right)  ,u\left(  r\right)  \right)  dW_{r}\right\vert ^{2}\right]
\nonumber\\
&  \text{ \ \ }+E\left[  \left\vert \int_{s}^{s^{\prime}}S\left(
\theta\left(  s^{\prime},r\right)  \right)  \varphi(r)dr\right\vert
^{2}+\left\vert \int_{s}^{s^{\prime}}S\left(  \theta\left(  s^{\prime
},r\right)  \right)  \psi\left(  r\right)  dW_{r}\right\vert ^{2}\right]
\nonumber\\
&  \text{ \ \ }+E\left[  \left\vert \int_{t}^{s}\left(  S\left(  \theta\left(
s^{\prime},r\right)  \right)  -S\left(  \theta\left(  s,r\right)  \right)
\right)  \varphi(r)dr\right\vert ^{2}\right]  \nonumber\\
&  \text{ \ \ }\left.  +E\left[  \left\vert \int_{t}^{s}\left(  S\left(
\theta\left(  s^{\prime},r\right)  \right)  -S\left(  \theta\left(
s,r\right)  \right)  \right)  \psi\left(  r\right)  dW_{r}\right\vert
^{2}\right]  \right)  \nonumber\\
&  =C\left(  I_{1}+I_{2}+I_{3}+I_{4}+I_{5}+I_{6}+I_{7}+I_{8}\right)
\label{In17}%
\end{align}
The strong continuity of the semigroup $S$ and a simple dominated convergence
argument yield%
\begin{equation}
\lim_{s^{\prime}\searrow s}I_{1}=0.\label{In17.1}%
\end{equation}
For the term $I_{2}$ (respectively $I_{4}$) we use (\ref{A1}) (respectively
(\ref{A2})) and Proposition 2 to get
\begin{equation}
\left\{
\begin{array}
[c]{c}%
I_{2}\leq C\left(  s^{\prime}-s\right)  ^{2}\text{ and}\\
I_{4}\leq C\left(  s^{\prime}-s\right)  ^{1-2\gamma}.
\end{array}
\right.  \label{In17.3}%
\end{equation}
For the term $I_{3}$ (respectively $I_{7}$) we use the continuity of the
semigroup $S$ and a dominated convergence argument to have%
\begin{equation}
\lim_{s^{\prime}\searrow s}I_{3}=0=\lim_{s^{\prime}\searrow s}I_{7}%
.\label{In17.5}%
\end{equation}
Next, one notices that
\[
I_{5}=E\left[  \int_{t}^{s}\left\vert \left(  S\left(  s^{\prime}-s\right)
-I\right)  S\left(  s-r\right)  g\left(  Y\left(  \sigma\left(  r\right)
\right)  ,u\left(  r\right)  \right)  \right\vert ^{2}dr\right]
\]
and, from the dominated convergence theorem it follows that
\begin{equation}
\lim_{s^{\prime}\searrow s}I_{5}=0.\label{In17.6}%
\end{equation}
Similar arguments hold true for $I_{8}$. Finally, the conditions (c) and (d)
in Definition 3 imply
\begin{equation}
\lim_{s^{\prime}\searrow s}I_{6}=0.\label{In17.7}%
\end{equation}
Combining (\ref{In17})-(\ref{In17.7}), we\ prove the mean-square
right-continuity of $Y.$ Similar arguments give the left-continuity. The proof
of the Proposition is now complete.
\end{proof}

The main step in the proof of Theorem 2 consists in the construction of
approximate mild solutions. To this purpose, we prove

\begin{lemma}
Let us suppose that (\ref{A1}) and (\ref{A2}) hold true and that $K\subset H$
is a nonempty, closed set which satisfies the quasi-tangency condition
(\ref{QTC1}) with $\lambda=0$. Then, for every $t\in\left[  0,T\right)  $,
every initial condition $\xi\in L^{2}\left(  \Omega,\mathcal{F}_{t}%
,P;K\right)  $, every time horizon $\widetilde{T}\in\left[  t,T\right]  $ and
for each $\varepsilon\in\left(  0,1\right)  ,$ there exists an $\varepsilon
$-approximate mild solution of (\ref{SDE1}) denoted by $\left(  \sigma
,u,\varphi,\psi,\theta,Y\right)  $ and defined on $\left[  t,\widetilde
{T}\right]  .$
\end{lemma}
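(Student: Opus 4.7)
The plan is to construct the $\varepsilon$-approximate mild solution by a maximality argument. Let $\mathcal{P}$ denote the family of all $\varepsilon$-approximate mild solutions of (\ref{SDE1}) defined on some subinterval $[t,T_1]\subset[t,\widetilde{T}]$, partially ordered by ``one element precedes another if it is the restriction of the second''. The set $\mathcal{P}$ is non-empty---the trivial sixtuple on the degenerate interval $[t,t]$ with $Y(t)=\xi$ lies in it---and, thanks to the uniform moment bound of Proposition 2 and the mean-square continuity of Proposition 3, every non-decreasing chain in $\mathcal{P}$ admits an upper bound: the chain members agree on overlaps and hence define a consistent sixtuple on $[t,T_\infty)$ for $T_\infty$ the common supremum, while $Y(T_\infty)$ is obtained as the $L^2$-limit produced by the explicit mild-solution formula together with the strong continuity of the semigroup. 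Brezis--Browder's ordering principle then yields a maximal element $(\sigma^*,u^*,\varphi^*,\psi^*,\theta^*,Y^*)$ defined on some $[t,T^*]$, and the task reduces to proving $T^*=\widetilde{T}$.

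Suppose, for contradiction, $T^*<\widetilde{T}$. Since $Y^*(T^*)\in L^2(\Omega,\mathcal{F}_{T^*},P;K)$, Proposition 1 applied with $\lambda=0$ furnishes, for any prescribed tolerance, an $h\in(0,(\widetilde{T}-T^*)\wedge\varepsilon)$, an admissible control $\bar u$ on $[T^*,T^*+h]$ and a random variable $p\in L^2(\Omega,\mathcal{F}_{T^*+h},P;H)$ with
\[
E\big[|p|^2\big]+h^{-1-2\gamma}E\big[|E^{\mathcal{F}_{T^*}}[p]|^2\big]<\varepsilon/2,
\]
such that
\begin{align*}
\eta:=\,&S(h)Y^*(T^*)+\int_{T^*}^{T^*+h}S(T^*+h-s)f(Y^*(T^*),\bar u(s))\,ds\\
&+\int_{T^*}^{T^*+h}S(T^*+h-s)g(Y^*(T^*),\bar u(s))\,dW_s+h^{1/2}p
\end{align*}
belongs to $K$, $dP$-almost surely. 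I would then prolong the sixtuple to $[t,T^*+h]$ by setting $\sigma(s)=T^*$ and $u(s)=\bar u(s)$ on $(T^*,T^*+h]$, $\theta(s,r)=\theta^*(T^*,r)+(s-T^*)$ for $r\leq T^*$ (preserving nonexpansiveness and the bound $\theta\leq s-t$), and $\theta(s,r)=0$ for $T^*<r<s\leq T^*+h$.

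The crucial ingredient is the decomposition $p=E^{\mathcal{F}_{T^*}}[p]+(p-E^{\mathcal{F}_{T^*}}[p])$, with the second summand represented via the martingale representation theorem as $\int_{T^*}^{T^*+h}\widetilde\psi(r)\,dW_r$ for some predictable $L_2(\Xi;H)$-valued process $\widetilde\psi$. Taking $\varphi(r)=h^{-1/2}E^{\mathcal{F}_{T^*}}[p]$ and $\psi(r)=h^{1/2}\widetilde\psi(r)$ on $(T^*,T^*+h]$ produces
\[
\int_{T^*}^{T^*+h}\varphi(r)\,dr+\int_{T^*}^{T^*+h}\psi(r)\,dW_r=h^{1/2}p,
\]
and the semigroup identity $S(T^*+h-r)=S(h)S(T^*-r)$ applied to the inherited pieces yields $Y(T^*+h)=\eta\in K$ together with $Y(\sigma(s))=Y^*(T^*)\in K$ for $s\in(T^*,T^*+h]$, contradicting the maximality of $T^*$.

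The main obstacle is verifying conditions (c), (d), and (g) of Definition 3 simultaneously. The $\psi$-contribution to (d) is $h\,E|p-E^{\mathcal{F}_{T^*}}[p]|^2\leq h\varepsilon/2$, harmlessly small. The $\varphi$-contribution to (c) equals $E|E^{\mathcal{F}_{T^*}}[p]|^2$, with no $h$ prefactor, and bounding it by $h\varepsilon/2$ is precisely what the stronger weight $h^{-1-2\gamma}$ in the quasi-tangency condition secures, since then $E|E^{\mathcal{F}_{T^*}}[p]|^2<h^{1+2\gamma}\varepsilon/2\leq h\varepsilon/2$ for $h\leq 1$. The oscillation bound $E|Y(s)-Y^*(T^*)|^2\leq\varepsilon$ on $(T^*,T^*+h]$ and the constraint $s-\sigma(s)\leq\varepsilon$ are both enforced by taking $h$ small enough in the $\liminf_{h\searrow 0}$ defining quasi-tangency, using the assumptions (\ref{A1}) and (\ref{A2}) to control the drift, diffusion, and correction integrals over the small interval.
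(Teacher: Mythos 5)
Your proposal follows essentially the same route as the paper's proof: a Brezis--Browder maximality argument over approximate solutions ordered by restriction, followed by a contradiction step that extends a maximal element using the quasi-tangency condition, the martingale representation $p=E^{\mathcal{F}_{T^*}}[p]+\int\widetilde{\psi}\,dW$, and the choices $\varphi=h^{-1/2}E^{\mathcal{F}_{T^*}}[p]$, $\psi=h^{1/2}\widetilde{\psi}$, with the same bookkeeping for conditions (c), (d) and (g). The only cosmetic differences are that you seed the order with the degenerate interval $[t,t]$ instead of the paper's separate first step on $[t,t+\delta]$, and you compress the verification that increasing chains admit upper bounds, which the paper carries out in detail; both are sound.
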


\begin{proof}
Let us fix $t\in\left[  0,T\right)  $, $\xi\in L^{2}\left(  \Omega
,\mathcal{F}_{t},P;K\right)  $, $\widetilde{T}\in\left[  t,T\right]  $ and
$\varepsilon\in\left(  0,1\right)  .$ The proof of the Lemma will be given in
three steps.

\underline{Step 1}. We will first show the existence of an $\varepsilon
$-approximate mild solution on some small interval $\left[  t,t+\delta\right]
.$ We fix $\varepsilon^{\prime}\in\left(  0,\varepsilon\right)  .$ We will
latter specify how $\varepsilon^{\prime}$ should be chosen. \ Using the
quasi-tangency property of $K$, one gets the existence of some $\delta
\in\left(  0,\varepsilon^{\prime}\right)  $, of an admissible control process
$u$ and of a random variable $p\in L^{2}\left(  \Omega,\mathcal{F}_{t+\delta
},P;H\right)  $ such that
\[
E\left[  \left\vert p\right\vert ^{2}\right]  +\frac{1}{\delta}E\left[
\left\vert E^{\mathcal{F}_{t}}\left[  p\right]  \right\vert ^{2}\right]
\leq\varepsilon^{\prime}%
\]
and%
\begin{align}
&  S\left(  \delta\right)  \xi+\int_{t}^{t+\delta}S\left(  t+\delta-s\right)
f\left(  \xi,u(s)\right)  ds\label{ap2}\\
&  +\int_{t}^{t+\delta}S\left(  t+\delta-s\right)  g\left(  \xi,u(s)\right)
dW_{s}+\sqrt{\delta}p\in K,dP-\text{almost surely}.\nonumber
\end{align}
We define the functions $\sigma:\left[  t,t+\delta\right]  \longrightarrow
\left[  t,t+\delta\right]  $ by setting $\sigma\left(  s\right)  =t,$ for all
$s\in\left[  t,t+\delta\right]  $. Using the martingale representation theorem
for the random variable $p$, we get the existence of some $L_{2}\left(
\Xi;H\right)  $-valued predictable process $\eta$, defined on $\left[
t,t+\delta\right]  $ such that
\[
p=E^{\mathcal{F}_{t}}\left[  p\right]  +\int_{t}^{t+\delta}\eta_{s}%
dW_{s},\text{ }dP-a.s.
\]
We introduce $\varphi:$ $\left[  t,t+\delta\right]  \longrightarrow H$ given
by
\[
\varphi(s)=\frac{1}{\sqrt{\delta}}E^{\mathcal{F}_{t}}\left[  p\right]  ,\text{
for all }s\in\left[  t,t+\delta\right]  ,
\]
and $\psi:\left[  t,t+\delta\right]  \longrightarrow L_{2}\left(
\Xi;H\right)  $ given by%
\[
\psi\left(  s\right)  =\sqrt{\delta}\eta_{s},\text{ for all }s\in\left[
t,t+\delta\right]  .
\]
Moreover, we let $\theta:\left\{  t\leq r<s\leq t+\delta\right\}
\longrightarrow\left[  t,t+\delta\right]  $ be defined by%
\[
\theta\left(  s,r\right)  =0,\text{ for all }t\leq r<s\leq t+\delta.
\]
Next, we define a process $Y$ by%
\begin{align*}
Y(s) &  =S\left(  s-t\right)  \xi+\int_{t}^{s}S\left(  s-r\right)  f\left(
\xi,u\left(  r\right)  \right)  dr\\
&  \text{ \ \ }+\int_{t}^{s}S\left(  s-r\right)  g\left(  \xi,u\left(
r\right)  \right)  dW_{r}+\int_{t}^{s}\varphi\left(  r\right)  dr+\int_{t}%
^{s}\psi\left(  r\right)  dW_{r},
\end{align*}
for all $s\in\left[  t,t+\delta\right]  .$ We claim that $\left(
\sigma,u,\varphi,\psi,\theta,Y\right)  $ is an $\varepsilon$-approximate mild
solution. The conditions (a), (b), (e) and (f) of Definition 3 are obviously
satisfied. From the choice of $p$, one gets
\begin{align*}
&  E\left[  \left\vert p\right\vert ^{2}\right]  +\frac{1}{\delta}E\left[
\left\vert E^{\mathcal{F}_{t}}\left[  p\right]  \right\vert ^{2}\right]  \\
&  =E\left[  \left\vert E^{\mathcal{F}_{t}}\left[  p\right]  \right\vert
^{2}\right]  +E\left[  \int_{t}^{t+\delta}\left\vert \eta_{s}\right\vert
^{2}ds\right]  +\frac{1}{\delta}E\left[  \left\vert E^{\mathcal{F}_{t}}\left[
p\right]  \right\vert ^{2}\right]  \leq\varepsilon^{\prime}.
\end{align*}
Thus, the conditions (c) and (d) are also satisfied. Hence, we only need to
check the last condition of Definition 3. To this purpose, we recall that
$\delta<\varepsilon^{\prime}$ and write%
\begin{align}
E\left[  \left\vert Y(s)-\xi\right\vert ^{2}\right]   &  \leq C\left(
E\left[  \left\vert S\left(  s-t\right)  \xi-\xi\right\vert ^{2}\right]
\right.  \nonumber\\
&  \text{ \ \ }+E\left[  \left(  \int_{t}^{s}\left\vert S\left(  s-r\right)
f\left(  \xi,u\left(  r\right)  \right)  \right\vert dr\right)  ^{2}\right]
\nonumber\\
&  \text{ \ \ }+E\left[  \int_{t}^{s}\left\vert S\left(  s-r\right)  g\left(
\xi,u\left(  r\right)  \right)  \right\vert ^{2}dr\right]  \nonumber\\
&  \text{ \ \ }\left.  +E\left[  \left(  \int_{t}^{s}\left\vert \varphi\left(
r\right)  \right\vert dr\right)  ^{2}\right]  +E\left[  \int_{t}^{s}\left\vert
\psi\left(  r\right)  \right\vert ^{2}dr\right]  \right)  \nonumber\\
&  \leq C\left(  \sup_{r\in\left[  0,\varepsilon^{\prime}\right]  }E\left[
\left\vert S\left(  r\right)  \xi-\xi\right\vert ^{2}\right]  +I_{1}%
+I_{2}+\varepsilon^{\prime}\right)  ,\text{ }\label{In16}%
\end{align}
for all $s\in\left[  t,t+\delta\right]  .$ Using (\ref{A2}), we notice that
\[
I_{2}\leq C\left(  \varepsilon^{\prime}\right)  ^{1-2\gamma}\left(  E\left[
\left\vert \xi\right\vert ^{2}\right]  +1\right)  .
\]
Similar estimates hold true for $I_{1}.$ We return to (\ref{In16}) and get
\[
E\left[  \left\vert Y(s)-\xi\right\vert ^{2}\right]  \leq C\left(  \sup
_{r\in\left[  0,\varepsilon^{\prime}\right]  }E\left[  \left\vert S\left(
r\right)  \xi-\xi\right\vert ^{2}\right]  +\left(  \varepsilon^{\prime
}\right)  ^{1-2\gamma}\right)  ,
\]
for all $s\in\left[  t,t+\delta\right]  .$ The constant $C$ may be chosen to
depend only on $T$, $\xi$ and the Lipschitz constant of $f$ and $g$ (but not
on $\delta$, nor $\varepsilon^{\prime}$). Thus, by considering small enough
$\varepsilon^{\prime}\in\left(  0,\varepsilon\right)  $, we may assume,
without loss of generality, that
\[
E\left[  \left\vert Y(s)-\xi\right\vert ^{2}\right]  \leq\varepsilon,\text{
for all }s\in\left[  t,t+\delta\right]  .
\]
By the choice of $\sigma,$ $Y\left(  \sigma\left(  s\right)  \right)  =\xi\in
K,$ $dP$-almost surely, for all $s\in\left[  t,t+\delta\right]  $ and
(\ref{ap2}) implies that $Y\left(  t+\delta\right)  \in K,$ $dP$-almost
surely. It follows that the condition (g) is also satisfied and, thus,
$\left(  \sigma,u,\varphi,\psi,\theta,Y\right)  $ is an $\varepsilon
$-approximate mild solution.

\underline{Step 2}. To prove the existence of some approximate solution on the
whole interval $\left[  t,\widetilde{T}\right]  ,$ we use the following
result, also known as the Brezis-Browder Theorem:

\begin{theorem}
Let $\mathcal{S}$ be a nonempty set, $\preceq\subset\mathcal{S}\times
\mathcal{S}$ a preorder on $\mathcal{S}$ and let $\mathcal{N}:\mathcal{S}%
\longrightarrow%
\mathbb{R}
\cup\left\{  +\infty\right\}  $ be an increasing function. Suppose that each
increasing sequence in $\mathcal{S}$ is bounded from above. Then, for each
$a_{0}\in\mathcal{S}$, there exists an $\mathcal{N}$-maximal element $a^{\ast
}\in\mathcal{S}$ such that $a_{0}\preceq a^{\ast}.$
\end{theorem}

For proof of this result and further remarks, the reader is referred to
Theorem 2.1.1 in \cite{CNV} and references therein.

We now return to the proof of the Lemma. We introduce the set $\mathcal{S}$ of
all $\varepsilon$-approximate mild solutions defined on intervals of the form
$\left[  t,t+\alpha\right]  \subset\left[  t,\widetilde{T}\right]  .$ On this
set, we define the following preorder relation $\preceq:$ given two
$\varepsilon$-approximate mild solutions $\left(  \sigma_{1},u_{1},\varphi
_{1},\psi_{1},\theta_{1},Y_{1}\right)  $ defined on $\left[  t,t+\alpha
_{1}\right]  $, respectively $\left(  \sigma_{2},u_{2},\varphi_{2},\psi
_{2},\theta_{2},Y_{2}\right)  $ defined on $\left[  t,t+\alpha_{2}\right]  ,$
we write%
\[
\left(  \sigma_{1},u_{1},\varphi_{1},\psi_{1},\theta_{1},Y_{1}\right)
\preceq\left(  \sigma_{2},u_{2},\varphi_{2},\psi_{2},\theta_{2},Y_{2}\right)
\]
if $\alpha_{1}\leq\alpha_{2}$, $u_{1}=u_{2},$ $\varphi_{1}=\varphi_{2},$
$\psi_{1}=\psi_{2}$ on $\left[  t,t+\alpha_{1}\right]  \times\Omega$ up to an
evanescent set and $\theta_{1}=\theta_{2}$ almost everywhere on $\left\{
t\leq r<s\leq t+\alpha_{1}\right\}  .$ We consider an increasing arbitrary
sequence in $\mathcal{S}$
\[
\mathcal{L=}\left\{  \left(  \sigma_{n},u_{n},\varphi_{n},\psi_{n},\theta
_{n},Y_{n}\right)  \text{ defined respectively on }\left[  t,t+\alpha
_{n}\right]  ,\text{ }n\in%
\mathbb{N}
\right\}  .
\]
We define
\[
\alpha=\sup_{n}\alpha_{n}.
\]
If $\alpha=\alpha_{n}$ for some index, then the element $\left(  \sigma
_{n},u_{n},\varphi_{n},\psi_{n},Y_{n}\right)  $ is an upper bound for
$\mathcal{L}$. Otherwise, since $\sigma_{n}$ are increasing functions
satisfying (a), there exists the limit
\[
\lim_{n\rightarrow\infty}\uparrow\sigma_{n}\left(  \alpha_{n}\right)
\in\left[  t,t+\alpha\right]  .
\]
This allows to define an increasing function $\sigma:\left[  t,t+\alpha
\right]  $ $\rightarrow\left[  t,t+\alpha\right]  $ by setting
\[
\sigma(s)=\left\{
\begin{array}
[c]{l}%
\sigma_{n}\left(  s\right)  ,\text{ if }s\in\left[  t,t+\alpha_{n}\right]  ,\\
\lim_{n\rightarrow\infty}\uparrow\sigma_{n}\left(  t+\alpha_{n}\right)
,\text{ if }s=t+\alpha.
\end{array}
\right.
\]
The function $\sigma$ satisfies the condition (a) of Definition 3. We consider
an element $u^{0}\in U$ and introduce the control process
\[
u\left(  s\right)  =1_{\left[  t,t+\alpha_{n}\right]  }(s)u_{n}(s)+1_{\left\{
t+\alpha\right\}  }\left(  s\right)  u^{0},
\]
for $s\in\left[  t,t+\alpha\right]  .$ Next, we define
\[
\varphi\left(  s\right)  =\left\{
\begin{array}
[c]{l}%
\varphi_{n}\left(  s\right)  ,\text{ if }s\in\left[  t,t+\alpha_{n}\right]
,\\
0,\text{ if }s=t+\alpha.
\end{array}
\right.
\]
and
\[
\psi\left(  s\right)  =\left\{
\begin{array}
[c]{l}%
\psi_{n}\left(  s\right)  ,\text{ if }s\in\left[  t,t+\alpha_{n}\right]  ,\\
0,\text{ if }s=t+\alpha.
\end{array}
\right.
\]
For every $n$, one can extend $\varphi_{n}$ on $\left[  t,t+\alpha\right]  $
by setting
\[
\varphi_{n}\left(  s\right)  =0,\text{ for all }s\in\left(  t+\alpha
_{n},t+\alpha\right]  .
\]
Then $\varphi$ is the pointwise limit of $\varphi_{n}$ (except for an
evanescent set). Thus, $\varphi$ is predictable. Property (c) of $\varepsilon
$-approximate mild solutions yields
\[
E\left[  \int_{t}^{t+\alpha}\left\vert \varphi_{n}\left(  s\right)
\right\vert ^{2}ds\right]  \leq\alpha_{n}\varepsilon,\text{ for all }n\in%
\mathbb{N}
.
\]
Then, by Fatou's lemma, one gets
\begin{equation}
E\left[  \int_{t}^{t+\alpha}\left\vert \varphi\left(  s\right)  \right\vert
^{2}ds\right]  \leq\alpha\varepsilon,
\end{equation}
and the condition (c) holds for $\varphi$. Moreover, a simple dominated
convergence argument proves that $\varphi_{n}\rightarrow_{n}\varphi$ in
$L^{2}\left(  \Omega\times\left[  t,t+\alpha\right]  ;H\right)  $. The
condition (d) follows in the same way. Next, one notices that whenever $m,k\in%
\mathbb{N}
$ such that $m\geq k,$ for every $r\in\left[  t,t+\alpha_{k}\right)  $, the
nonexpansive property of $\theta_{m}$ and $\theta_{k}$ yields
\[
\left\vert \theta_{m}\left(  t+\alpha_{m},r\right)  -\theta_{k}\left(
t+\alpha_{k},r\right)  \right\vert \leq\left\vert \alpha_{m}-\alpha
_{k}\right\vert .
\]
This implies the existence of $\lim_{n}\theta_{n}\left(  t+\alpha
_{n},r\right)  .$ We define
\[
\theta\left(  s,r\right)  =\left\{
\begin{array}
[c]{l}%
\theta_{n}\left(  s,r\right)  ,\text{ if }t\leq r<s\leq t+\alpha_{n},\\
\lim_{n}\theta_{n}\left(  t+\alpha_{n},r\right)  ,\text{ if }t\leq
r<s=t+\alpha.
\end{array}
\right.
\]
We recall that
\begin{align*}
&  Y_{n}(t+\alpha_{n})\\
&  =S\left(  \alpha_{n}\right)  \xi+\int_{t}^{t+\alpha}1_{\left[
t,t+\alpha_{n}\right]  }\left(  r\right)  S\left(  t+\alpha_{n}-r\right)
f\left(  Y_{n}\left(  \sigma_{n}\left(  r\right)  \right)  ,u_{n}\left(
r\right)  \right)  dr\\
&  \text{ \ \ }+\int_{t}^{t+\alpha}1_{\left[  t,t+\alpha_{n}\right]  }\left(
r\right)  S\left(  t+\alpha_{n}-r\right)  g\left(  Y_{n}\left(  \sigma
_{n}\left(  r\right)  \right)  ,u_{n}\left(  r\right)  \right)  dW_{r}\\
&  \text{ \ \ }+\int_{t}^{t+\alpha}1_{\left[  t,t+\alpha_{n}\right]  }\left(
r\right)  S\left(  \theta\left(  t+\alpha_{n},r\right)  \right)  \varphi
_{n}\left(  r\right)  dr\\
&  \text{ \ \ }+\int_{t}^{t+\alpha}1_{\left[  t,t+\alpha_{n}\right]  }\left(
r\right)  S\left(  \theta\left(  t+\alpha_{n},r\right)  \right)  \psi
_{n}\left(  r\right)  dW_{r}.
\end{align*}
Proposition 2 and a simple dominated convergence argument allow to obtain the
existence of the limit
\[
\lim_{n}Y_{n}(t+\alpha_{n})\text{ in }L^{2}\left(  \Omega,\mathcal{F}%
_{t+\alpha},P;H\right)  .
\]
Moreover, since $K$ is closed, the limit is in $K$ $dP$-almost surely. We can
now define
\[
Y\left(  s\right)  =\left\{
\begin{array}
[c]{l}%
Y_{n}\left(  s\right)  ,\text{ if }s\in\left[  t,t+\alpha_{n}\right]  ,\\
\lim_{n}Y_{n}(t+\alpha_{n}),\text{ if }s=t+\alpha.
\end{array}
\right.
\]
We notice that, whenever $\sigma\left(  t+\alpha\right)  >\sigma\left(
t+\alpha_{n}\right)  ,$ for all $n\geq1$, by the mean-square continuity of
$Y$, the process $Y_{\sigma}$ can be seen as the pointwise limit of the
sequence
\[
Y_{n}\left(  \sigma_{n}\left(  \cdot\right)  \right)  1_{\left[
t,t+\alpha_{n}\right]  }\left(  \cdot\right)  +Y\left(  \sigma_{n}\left(
t+\alpha_{n}\right)  \right)  1_{\left(  t+\alpha_{n},t+\alpha\right]
}\left(  \cdot\right)  .
\]
Thus, $Y_{\sigma}$ is predictable.

Let us check the condition (g) of Definition 3. We need to show that $Y\left(
\sigma\left(  s\right)  \right)  \in K$, $dP-a.s.$ and for all $s\in\left[
t,t+\alpha\right]  .$ If $s\leq t+\alpha_{n}$ for some $n$, then $Y\left(
\sigma\left(  s\right)  \right)  \in K,$ $dP-a.s.$ Otherwise$,$ using the fact
that $\sigma\left(  t+\alpha\right)  =\lim_{n}\sigma_{n}\left(  t+\alpha
_{n}\right)  $, $\ $and $Y$ is mean-square continuous, we get $Y\left(
\sigma\left(  t+\alpha\right)  \right)  \in K,$ $dP-a.s$. In order to prove
that $\left(  \sigma,u,\varphi,\psi,\theta,Y\right)  $ is an $\varepsilon
$-approximate mild solution on $\left[  t,t+\alpha\right]  $ one only needs to
verify%
\[
E\left[  \left\vert Y\left(  s\right)  -Y\left(  \sigma\left(  s\right)
\right)  \right\vert ^{2}\right]  \leq\varepsilon,
\]
for all $s\in\left[  t,t+\alpha\right]  $. If $s\leq t+\alpha_{n}$ for some
$n$, we have nothing to prove. We recall that
\[
E\left[  \left\vert Y\left(  t+\alpha_{n}\right)  -Y\left(  \sigma\left(
t+\alpha_{n}\right)  \right)  \right\vert ^{2}\right]  \leq\varepsilon,
\]
for all $n\geq1$. Using the definition of $Y$ and $\sigma$ and the continuity
of $Y,$ we also get%
\[
E\left[  \left\vert Y\left(  t+\alpha\right)  -Y\left(  \sigma\left(
t+\alpha\right)  \right)  \right\vert ^{2}\right]  \leq\varepsilon.
\]
It follows that $\left(  \sigma,u,\varphi,\psi,\theta,Y\right)  $ is an
$\varepsilon$-approximate mild solution on $\left[  t,t+\alpha\right]  $ and
an upper bound for $\mathcal{L}$. We introduce the increasing function
\[
\mathcal{N}:\mathcal{S}\longrightarrow%
\mathbb{R}
_{+},\text{ given by }\mathcal{N}\left(  \left(  \sigma,u,\varphi,\psi
,\theta,Y\right)  \right)  =\alpha,\text{\ }%
\]
whenever $\left(  \sigma,u,\varphi,\psi,\theta,Y\right)  $ is defined on
$\left[  t,t+\alpha\right]  .$ We apply the Brezis-Browder Theorem to obtain
the existence of an $\mathcal{N}$-maximal element of $\mathcal{S}$ denoted by
$\left(  \sigma^{\ast},u^{\ast},\varphi^{\ast},\psi^{\ast},Y^{\ast}\right)  $
and defined on $\left[  t,t+\alpha^{\ast}\right]  .$

\underline{Step 3}. We claim that $t+\alpha^{\ast}=\widetilde{T}.$ Let us
assume, for the moment, that $t+\alpha^{\ast}<\widetilde{T}.$ By definition,
$Y^{\ast}(t+\alpha^{\ast})\in K$ $dP$-a.s. We recall that $K$ satisfies the
quasi-tangency condition with respect to the control system (\ref{SDE1}).
Therefore, for every $\varepsilon^{\prime}<\varepsilon$ there exist
$0<\delta^{\ast}\leq\min\left\{  \widetilde{T}-t-\alpha^{\ast},\varepsilon
^{\prime}\right\}  $, $p^{\ast}\in L^{2}\left(  \Omega,\mathcal{F}%
_{t+\alpha^{\ast}+\delta^{\ast}},P;H\right)  $ and an admissible control
process $\widetilde{u}\in\mathcal{A}$ such that
\begin{equation}
E\left[  \left\vert p^{\ast}\right\vert ^{2}\right]  +\frac{1}{\delta^{\ast}%
}E\left[  \left\vert E^{\mathcal{F}_{t+\alpha^{\ast}}}\left[  p^{\ast}\right]
\right\vert ^{2}\right]  \leq\varepsilon^{\prime},\text{ and}\label{In2}%
\end{equation}%
\begin{align}
&  S\left(  \delta^{\ast}\right)  Y^{\ast}\left(  t+\alpha^{\ast}\right)
+\int_{t+\alpha^{\ast}}^{t+\alpha^{\ast}+\delta^{\ast}}S\left(  t+\alpha
^{\ast}+\delta^{\ast}-s\right)  f\left(  Y^{\ast}\left(  t+\alpha^{\ast
}\right)  ,\widetilde{u}(s)\right)  ds\nonumber\\
&  +\int_{t+\alpha^{\ast}}^{t+\alpha^{\ast}+\delta^{\ast}}S\left(
t+\alpha^{\ast}+\delta^{\ast}-s\right)  g\left(  Y^{\ast}\left(
t+\alpha^{\ast}\right)  ,\widetilde{u}(s)\right)  dW_{s}+\sqrt{\delta^{\ast}%
}p^{\ast}\in K,\label{ap1}%
\end{align}
$dP-$almost surely$.$ The martingale representation theorem yields the
existence of some predictable process $\eta^{\ast}$ defined on $\left[
t+\alpha^{\ast},t+\alpha^{\ast}+\delta^{\ast}\right]  $ such that
\[
p^{\ast}=E^{\mathcal{F}_{t+\alpha^{\ast}}}\left[  p^{\ast}\right]
+\int_{t+\alpha^{\ast}}^{t+\alpha^{\ast}+\delta^{\ast}}\eta_{s}^{\ast}dW_{s}.
\]
We introduce the functions%
\begin{equation}
\sigma(s)=\left\{
\begin{array}
[c]{l}%
\sigma^{\ast}\left(  s\right)  ,\text{ if }s\in\left[  t,t+\alpha^{\ast
}\right]  ,\\
t+\alpha^{\ast},\text{ if }s\in\left(  t+\alpha^{\ast},t+\alpha^{\ast}%
+\delta^{\ast}\right]  ,
\end{array}
\right.  \label{sigma}%
\end{equation}%
\[
u\left(  s\right)  =\left\{
\begin{array}
[c]{l}%
u^{\ast}\left(  s\right)  ,\text{ if }s\in\left[  t,t+\alpha^{\ast}\right]
,\\
\widetilde{u}\left(  s\right)  ,\text{ if }s\in\left(  t+\alpha^{\ast
},t+\alpha^{\ast}+\delta^{\ast}\right]  ,
\end{array}
\right.
\]%
\begin{equation}
\varphi\left(  s\right)  =\left\{
\begin{array}
[c]{l}%
\varphi^{\ast}\left(  s\right)  ,\text{ if }s\in\left[  t,t+\alpha^{\ast
}\right]  ,\\
\frac{1}{\sqrt{\delta^{\ast}}}E^{\mathcal{F}_{t+\alpha^{\ast}}}\left[
p^{\ast}\right]  ,\text{ if }s\in\left(  t+\alpha^{\ast},t+\alpha^{\ast
}+\delta^{\ast}\right]  ,
\end{array}
\right.  \label{phi}%
\end{equation}%
\[
\psi\left(  s\right)  =\left\{
\begin{array}
[c]{l}%
\psi^{\ast}\left(  s\right)  ,\text{ if }s\in\left[  t,t+\alpha^{\ast}\right]
,\\
\sqrt{\delta^{\ast}}\eta_{s}^{\ast},\text{ if }s\in\left(  t+\alpha^{\ast
},t+\alpha^{\ast}+\delta^{\ast}\right]  ,
\end{array}
\right.
\]%
\[
\theta\left(  s,r\right)  =\left\{
\begin{array}
[c]{l}%
\theta^{\ast}\left(  s,r\right)  ,\text{ if }t\leq r<s\leq t+\alpha^{\ast},\\
\left(  s-t-\alpha^{\ast}\right)  +\theta^{\ast}\left(  t+\alpha^{\ast
},r\right)  ,\\
\text{ \ \ \ \ \ \ \ \ \ \ \ \  if }t<r<t+\alpha^{\ast}<s\leq t+\alpha^{\ast
}+\delta^{\ast},\\
0,\text{ \ \ \ \ \ \ \ \ \ if }t+\alpha^{\ast}\leq r<s\leq t+\alpha^{\ast
}+\delta^{\ast}%
\end{array}
\right.
\]
and%
\[
Y\left(  s\right)  =\left\{
\begin{array}
[c]{l}%
Y^{\ast}\left(  s\right)  ,\text{ if }s\in\left[  t,t+\alpha^{\ast}\right]
,\\
S(s-t-\alpha^{\ast})Y^{\ast}\left(  t+\alpha^{\ast}\right)  +\int
_{t+\alpha^{\ast}}^{s}S\left(  s-r\right)  f\left(  Y^{\ast}\left(
t+\alpha^{\ast}\right)  ,u\left(  r\right)  \right)  dr\\
\multicolumn{1}{r}{+\int_{t+\alpha^{\ast}}^{s}S\left(  s-r\right)  g\left(
Y^{\ast}\left(  t+\alpha^{\ast}\right)  ,u\left(  r\right)  \right)
dW_{r}+\int_{t+\alpha^{\ast}}^{s}\varphi\left(  r\right)  dr}\\
\text{ \ \ \ \ \ \ \ \ \ \ }+\int_{t+\alpha^{\ast}}^{s}\psi\left(  r\right)
dW_{r},\text{ if }s\in\left(  t+\alpha^{\ast},t+\alpha^{\ast}+\delta^{\ast
}\right]  .
\end{array}
\right.
\]
It suffices now to choose $\varepsilon^{\prime}$ as in Step 1 to prove that
$\left(  \sigma,u,\varphi,\psi,\theta,Y\right)  $ is an element of
$\mathcal{S}$. Moreover,
\begin{align*}
\left(  \sigma^{\ast},u^{\ast},\varphi^{\ast},\psi^{\ast},\theta^{\ast
},Y^{\ast}\right)   &  \preceq\left(  \sigma,u,\varphi,\psi,\theta,Y\right)
\text{ and}\\
\mathcal{N}\left(  \left(  \sigma^{\ast},u^{\ast},\varphi^{\ast},\psi^{\ast
},\theta^{\ast},Y^{\ast}\right)  \right)   &  <\mathcal{N}\left(  \left(
\sigma,u,\varphi,\psi,\theta,Y\right)  \right)  .
\end{align*}
This inequalities come in contradiction with the initial assumption of

$\left(  \sigma^{\ast},u^{\ast},\varphi^{\ast},\psi^{\ast},Y^{\ast}\right)  $
being maximal. We deduce that $\left(  \sigma^{\ast},u^{\ast},\varphi^{\ast
},\psi^{\ast},Y^{\ast}\right)  $ is an $\varepsilon-$approximate mild solution
defined on $\left[  t,\widetilde{T}\right]  $ and this completes the proof of
our Lemma.
\end{proof}

Using this result, we are now able to prove that the quasi-tangency condition
(\ref{QTC1}) written for $\lambda=0$ provides a sufficient condition for
$\varepsilon$-viability.

\begin{proof}
(of Theorem 2). We assume that $K$ satisfies the quasi-tangency condition
(\ref{QTC1}) with $\lambda=0$. Let us fix $t\in\left[  0,T\right)  $, $\xi\in
L^{2}\left(  \Omega,\mathcal{F}_{t},P;K\right)  $ and $\varepsilon\in\left(
0,1\right)  .$ We apply the previous Lemma and get the existence of an
$\varepsilon$-approximate mild solution of (\ref{SDE1}) denoted $\left(
\sigma,u,\varphi,\psi,\theta,Y\right)  $ which is defined on $\left[
t,T\right]  .$ From the definition of $\varepsilon$-approximate mild
solutions,%
\begin{align}
Y(s) &  =S\left(  s-t\right)  \xi+\int_{t}^{s}S\left(  s-r\right)  f\left(
Y\left(  \sigma\left(  r\right)  \right)  ,u\left(  r\right)  \right)
dr\nonumber\\
&  +\int_{t}^{s}S\left(  s-r\right)  g\left(  Y\left(  \sigma\left(  r\right)
\right)  ,u\left(  r\right)  \right)  dW_{r}\nonumber\\
&  +\int_{t}^{s}S\left(  \theta\left(  s,r\right)  \right)  \varphi\left(
r\right)  dr+\int_{t}^{s}S\left(  \theta\left(  s,r\right)  \right)
\psi\left(  r\right)  dW_{r},\label{In3}%
\end{align}
$dP-a.s.,$ for all $s\in\left[  t,T\right]  $, $Y\left(  \sigma\left(
s\right)  \right)  \in K,$ $dP$-almost surely, for all $s\in\left[
t,T\right]  $ and $Y\left(  T\right)  \in K,$ $dP$-almost surely. Moreover,%
\begin{equation}
E\left[  \left\vert Y\left(  \sigma\left(  s\right)  \right)  -Y\left(
s\right)  \right\vert ^{2}\right]  \leq\varepsilon,\label{In6}%
\end{equation}
for all $s\in\left[  t,T\right]  $. It follows that
\begin{align}
&  d^{2}\left(  X^{t,\xi,u}\left(  s\right)  ,L^{2}\left(  \Omega
,\mathcal{F}_{s},P;K\right)  \right)  \leq E\left[  \left\vert X^{t,\xi
,u}\left(  s\right)  -Y\left(  \sigma\left(  s\right)  \right)  \right\vert
^{2}\right]  \nonumber\\
&  \leq2\left(  E\left[  \left\vert Y\left(  s\right)  -Y\left(  \sigma\left(
s\right)  \right)  \right\vert ^{2}\right]  +E\left[  \left\vert X^{t,\xi
,u}\left(  s\right)  -Y\left(  s\right)  \right\vert ^{2}\right]  \right)
,\label{In4}%
\end{align}
for all $s\in\left[  t,T\right]  .$ Next, in order to estimate $E\left[
\left\vert X^{t,\xi,u}\left(  s\right)  -Y\left(  s\right)  \right\vert
^{2}\right]  ,$ we use%
\begin{align}
&  E\left[  \left\vert X^{t,\xi,u}\left(  s\right)  -Y\left(  s\right)
\right\vert ^{2}\right]  \nonumber\\
&  \leq C\left(  E\left[  \left\vert \int_{t}^{s}S\left(  s-r\right)  \left(
f\left(  Y\left(  r\right)  ,u\left(  r\right)  \right)  -f\left(  X^{t,\xi
,u}\left(  r\right)  ,u\left(  r\right)  \right)  \right)  dr\right\vert
^{2}\right]  \right.  \nonumber\\
&  \text{ \ \ }+E\left[  \left\vert \int_{t}^{s}S\left(  s-r\right)  \left(
f\left(  Y\left(  r\right)  ,u\left(  r\right)  \right)  -f\left(  Y\left(
\sigma\left(  r\right)  \right)  ,u\left(  r\right)  \right)  \right)
dr\right\vert ^{2}\right]  \nonumber\\
&  \text{ \ \ }+E\left[  \left\vert \int_{t}^{s}S\left(  s-r\right)  \left(
g\left(  Y\left(  r\right)  ,u\left(  r\right)  \right)  -g\left(  X^{t,\xi
,u}\left(  r\right)  ,u\left(  r\right)  \right)  \right)  dW_{r}\right\vert
^{2}\right]  \nonumber\\
&  \text{ \ \ }+E\left[  \left\vert \int_{t}^{s}S\left(  s-r\right)  \left(
f\left(  Y\left(  r\right)  ,u\left(  r\right)  \right)  -f\left(  Y\left(
\sigma\left(  r\right)  \right)  ,u\left(  r\right)  \right)  \right)
dW_{r}\right\vert ^{2}\right]  \nonumber\\
&  \text{ \ \ }\left.  +E\left[  \left\vert \int_{t}^{s}S\left(  \theta\left(
s,r\right)  \right)  \varphi\left(  s\right)  ds\right\vert ^{2}\right]
+E\left[  \left\vert \int_{t}^{s}S\left(  \theta\left(  s,r\right)  \right)
\psi\left(  s\right)  dW_{s}\right\vert ^{2}\right]  \right)  \nonumber\\
&  =C\left(  I_{1}+I_{2}+I_{3}+I_{4}+I_{5}+I_{6}\right)  ,\label{In5}%
\end{align}
for all $s\in\left[  t,T\right]  .$ In order to estimate $I_{1},$ we use the
Cauchy-Schwarz inequality and the Lipschitz property of $f,$ and get%
\begin{align}
I_{1} &  \leq CE\left[  \left(  \int_{t}^{s}\left\vert Y\left(  r\right)
-X^{t,\xi,u}\left(  r\right)  \right\vert dr\right)  ^{2}\right]  \nonumber\\
&  \leq C\left(  s-t\right)  ^{1+2\gamma}\int_{t}^{s}\left(  s-r\right)
^{-2\gamma}E\left[  \left\vert Y\left(  r\right)  -X^{t,\xi,u}\left(
r\right)  \right\vert ^{2}\right]  dr.\label{In7}%
\end{align}
For $I_{2},$ similar arguments combined with (\ref{In6}) yield%
\begin{equation}
I_{2}\leq C\varepsilon.\label{In8}%
\end{equation}
The assumption (A2) gives%
\begin{align}
I_{3} &  =E\left[  \int_{t}^{s}\left\vert S\left(  s-r\right)  \left(
g\left(  Y\left(  r\right)  ,u\left(  r\right)  \right)  -g\left(  X^{t,\xi
,u}\left(  r\right)  ,u\left(  r\right)  \right)  \right)  \right\vert
^{2}dr\right]  \nonumber\\
&  \leq C\int_{t}^{s}\left(  \left(  s-r\right)  \wedge1\right)  ^{-2\gamma
}E\left[  \left\vert Y\left(  r\right)  -X^{t,\xi,u}\left(  r\right)
\right\vert ^{2}\right]  dr\label{In9}%
\end{align}
and%
\begin{equation}
I_{4}\leq C\varepsilon.\label{In11}%
\end{equation}
For the last terms $I_{5,6}$ it suffices to recall properties (c) and (d) of
Definition 3 and get
\begin{equation}
I_{5}+I_{6}\leq C\varepsilon.\label{In12}%
\end{equation}
We substitute the estimates (\ref{In7})-(\ref{In12}) in (\ref{In5}) to obtain%
\begin{align*}
&  E\left[  \left\vert X^{t,\xi,u}\left(  s\right)  -Y\left(  s\right)
\right\vert ^{2}\right]  \\
&  \leq C\left(  \varepsilon+\int_{t}^{s}\left(  s-r\right)  ^{-2\gamma
}E\left[  \left\vert Y\left(  r\right)  -X^{t,\xi,u}\left(  r\right)
\right\vert ^{2}\right]  dr\right)  ,
\end{align*}
for all $s\in\left[  t,T\right]  .$ Then, applying a variant of Gronwall's
inequality, we have
\begin{equation}
E\left[  \left\vert X^{t,\xi,u}\left(  s\right)  -Y\left(  s\right)
\right\vert ^{2}\right]  \leq C\varepsilon,\label{In13}%
\end{equation}
for all $s\in\left[  t,T\right]  .$ We substitute (\ref{In6}) and (\ref{In13})
in (\ref{In4}) to finally get
\[
d^{2}\left(  X^{t,\xi,u}\left(  s\right)  ,L^{2}\left(  \Omega,\mathcal{F}%
_{s},P;K\right)  \right)  \leq C\varepsilon,
\]
for all $s\in\left[  t,T\right]  .$ The conclusion follows by recalling that
$C$ can be chosen independent of $\varepsilon$ (and $s\in\left[  t,T\right]  $
) and $\varepsilon\in\left(  0,1\right)  $ is arbitrary. The proof of the main
result is now complete.
\end{proof}

\section{Applications}

\subsection{The linear case}

Let us now consider the following particular case of the control system
(\ref{SDE1}):%
\begin{equation}
\left\{
\begin{array}
[c]{l}%
dX^{t,\xi,u}\left(  s\right)  =\left(  AX^{t,\xi,u}\left(  s\right)
+Bu\left(  s\right)  \right)  ds+\left(  CX^{t,\xi,u}\left(  s\right)
+Du\left(  s\right)  \right)  dW_{s},\\
\text{for }s\in\left[  t,T\right]  ,\\
X^{t,\xi,u}\left(  t\right)  =\xi\in L^{2}\left(  \Omega,\mathcal{F}%
_{t},P;H\right)  .
\end{array}
\right.  \label{SDE2}%
\end{equation}
Here $A$ is a linear unbounded operator on $H$ that generates a $C_{0}%
$-semigroup of linear operators $\left(  S\left(  t\right)  \right)  _{t\geq
0},$ $B\in\mathcal{L}\left(  G;H\right)  ,$ $C$ is an $\mathcal{L}\left(
\Xi;H\right)  $-valued linear operator on $H$ and there exist $\gamma
\in\left[  0,\frac{1}{2}\right)  $ and $c>0$ such that, for every $t>0,$
$S\left(  t\right)  C\in\mathcal{L}\left(  H;L_{2}\left(  \Xi;H\right)
\right)  $ and $\left\vert S\left(  t\right)  C\right\vert _{\mathcal{L}%
\left(  H;L_{2}\left(  \Xi;H\right)  \right)  }\leq c\left(  t\wedge1\right)
^{-\gamma},$ and $D$ is an $L_{2}\left(  \Xi;H\right)  $-valued linear bounded
operator on $G.$ We also suppose that
\begin{equation}
U\text{ is a closed, bounded and convex subset of }G. \tag{A3}\label{A3}%
\end{equation}

\begin{remark}
If the assumption (\ref{A3}) holds true, the space of admissible control
processes $\mathcal{A}$ is convex. As a consequence, $\mathcal{A}$ is a closed
subspace of $L^{2}\left(  \left[  t,T\right]  \times\Omega;G\right)  $ with
respect to the weak topology on $L^{2}\left(  \left[  t,T\right]  \times
\Omega;G\right)  $.
\end{remark}

It is obvious that viability implies $\varepsilon$-viability for a closed set
$K\subset H$. For the particular case of a linear control system we will prove
that the quasi-tangency condition written for $\lambda=0$ is a sufficient
condition not only for the $\varepsilon-$viability, but also for the viability
property of an arbitrary nonempty, closed and convex set $K\subset H.$ Hence,
whenever $C\in\mathcal{L}\left(  H;L_{2}\left(  \Xi;H\right)  \right)  ,$
viability and $\varepsilon$-viability of closed, convex sets with respect to
the linear control system (\ref{SDE2}) are equivalent. The main result of this
section is

\begin{theorem}
Let us suppose that (\ref{A1}), (\ref{A2}) and (\ref{A3}) hold true. Moreover,
we suppose that $K\subset H$ is a nonempty, closed and convex set that
satisfies the quasi-tangency condition with respect to the control system
(\ref{SDE2}) with $\lambda=0$. Then $K$ is viable with respect to the control
system (\ref{SDE2}).
\end{theorem}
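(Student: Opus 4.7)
The strategy is to use Theorem 2 together with the linear-affine structure of (\ref{SDE2}) and the convexity assumptions to upgrade $\varepsilon$-viability into exact viability. By Theorem 2, the quasi-tangency condition (with $\lambda=0$) implies that for every $\xi\in L^{2}(\Omega,\mathcal{F}_{t},P;K)$ and every $n\geq 1$ there exists a control $u_{n}\in\mathcal{A}$ such that the corresponding mild solution $X_{n}:=X^{t,\xi,u_{n}}$ satisfies
\[
\sup_{s\in\lbrack t,T]}d\bigl(X_{n}(s),L^{2}(\Omega,\mathcal{F}_{s},P;K)\bigr)\leq \tfrac{1}{n}.
\]
The plan is to extract a weak limit $u^{\ast}$ of $(u_{n})_{n}$, to show that $X^{t,\xi,u^{\ast}}$ is the weak limit of $(X_{n})_{n}$ in the appropriate sense, and finally to identify $X^{t,\xi,u^{\ast}}(s)$ as an element of $L^{2}(\Omega,\mathcal{F}_{s},P;K)$ using weak closedness.

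For the weak compactness step, I invoke the remark preceding the theorem: under (\ref{A3}) the set $\mathcal{A}$ is a bounded, closed and convex subset of the Hilbert space $L^{2}([t,T]\times\Omega;G)$ (with respect to the predictable $\sigma$-algebra), hence weakly compact. Thus, along a subsequence still denoted $(u_{n})_{n}$, we have $u_{n}\rightharpoonup u^{\ast}\in\mathcal{A}$ weakly in $L^{2}([t,T]\times\Omega;G)$. For the passage to the limit, the key observation is that for the linear system (\ref{SDE2}) the solution map $u\mapsto X^{t,\xi,u}(s)$ is affine: writing $X^{t,\xi,u}(s)=X^{t,\xi,0}(s)+\Lambda u(s)$, the operator $\Lambda$ is the mild solution operator for the SDE with zero initial data and inhomogeneities $Bu(r)\,dr+Du(r)\,dW_{r}$. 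Using the Lipschitz/linear-growth bounds implied by (\ref{A1}), (\ref{A2}), a standard Gronwall-type estimate shows that $\Lambda$ is a bounded linear operator from $L^{2}([t,T]\times\Omega;G)$ into $L^{2}(\Omega,\mathcal{F}_{s},P;H)$ for every fixed $s\in[t,T]$. Bounded linear operators between Hilbert spaces are weak-to-weak continuous, so $X_{n}(s)\rightharpoonup X^{t,\xi,u^{\ast}}(s)$ in $L^{2}(\Omega,\mathcal{F}_{s},P;H)$ for each $s\in[t,T]$.

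To conclude, note that $L^{2}(\Omega,\mathcal{F}_{s},P;K)$ is closed and convex in $L^{2}(\Omega,\mathcal{F}_{s},P;H)$ (because $K$ is closed and convex), hence weakly closed. For each $n$, by definition of the distance we may pick $\eta_{n,s}\in L^{2}(\Omega,\mathcal{F}_{s},P;K)$ with $E[|X_{n}(s)-\eta_{n,s}|^{2}]\leq 2/n$. Then $\eta_{n,s}=X_{n}(s)+o(1)$ strongly, so $\eta_{n,s}\rightharpoonup X^{t,\xi,u^{\ast}}(s)$ weakly, and weak closedness yields $X^{t,\xi,u^{\ast}}(s)\in L^{2}(\Omega,\mathcal{F}_{s},P;K)$ for each $s\in[t,T]$; that is, $X^{t,\xi,u^{\ast}}(s)\in K$, $dP$-a.s., for each fixed $s$. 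The \emph{exceptional set} depending on $s$ can be removed by the mean-square continuity of the mild solution: restrict to a countable dense set of times and extend to every $s\in[t,T]$ by taking limits in $L^{2}$, since $K$ is closed.

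The main obstacle I anticipate is the rigorous identification of the weak limit in the It\^o stochastic integral part of the equation, in particular the term $\int_{t}^{s}S(s-r)CX_{n}(r)\,dW_{r}$. Here linearity is essential: this is not, a priori, a weakly continuous map of $u$, but because $X_{n}$ itself depends affinely on $u_{n}$, one can write the whole It\^o integral as a bounded linear functional of $u_{n}$ (via the explicit mild-solution formula combined with Picard iteration, which converges uniformly on $[t,T]$ in $L^{2}$ by (\ref{A2}) with $\gamma<1/2$); some care is required with measurability/predictability of the weak limit $u^{\ast}$ and with the singularity $(s-r)^{-\gamma}$ appearing in the Hilbert--Schmidt estimates, but these are handled by the same Gronwall-with-singular-kernel argument used in the proof of Theorem 2.
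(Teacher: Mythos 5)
Your proof is correct, and it takes a genuinely different route from the paper's. The paper does not pass through Theorem 2 at all here: it returns to Lemma 1, takes the $n^{-1}$-approximate mild solutions $(\sigma_n,u_n,\varphi_n,\psi_n,\theta_n,Y_n)$ on $[t,T]$, extracts a joint weak limit $(Y,u)$ of $(Y_n,u_n)$, and passes to the limit term by term in the integral equation for $Y_n$ (the drift via duality with $B^{\ast}S^{\ast}$, the stochastic integrals via the martingale representation theorem, the correctors $\varphi_n,\psi_n$ converging strongly to $0$) to identify $Y$ with $X^{t,\xi,u}$; the constraint is then recovered by applying Mazur's lemma to the convex functional $Z\mapsto E\left[\int_t^T d_K^2\left(Z(s)\right)ds\right]$. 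You instead invoke Theorem 2 as a black box to get exact mild solutions $X_n=X^{t,\xi,u_n}$ uniformly $1/n$-close to $L^2(\Omega,\mathcal{F}_s,P;K)$, so the only limit passage you need is weak-to-weak continuity of the bounded affine solution map $u\mapsto X^{t,\xi,u}(s)$ — the linear-system substitute for the paper's term-by-term argument — followed by weak closedness of the closed convex set $L^2(\Omega,\mathcal{F}_s,P;K)$ (which is the same fact Mazur's lemma encodes in the paper). Both proofs use (A3) identically (weak sequential compactness of $\mathcal{A}$ and membership of the weak limit $u^{\ast}$ in $\mathcal{A}$) and convexity of $K$ identically. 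Your version buys a cleaner limit: no time-lag $\sigma_n$ and no corrector terms to control, since Theorem 2 has already upgraded approximate solutions to exact ones. The paper's version buys independence from the conclusion of Theorem 2 and a template less tied to affineness of the solution map. The one step you should write out fully is the boundedness and affineness of $\Lambda$: it follows from uniqueness of mild solutions together with the a priori estimate $\sup_{s}E\left[\left\vert X^{t,\xi,u}(s)\right\vert^2\right]\leq C\left(E\left[\left\vert\xi\right\vert^2\right]+\left\Vert u\right\Vert^2\right)$, obtained from the same singular-kernel Gronwall inequality used elsewhere in the paper; once that is in place, weak-to-weak continuity is automatic and the rest of your argument goes through.
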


\begin{proof}
Let us fix $t\in\left[  0,T\right)  $ and $\xi\in L^{2}\left(  \Omega
,\mathcal{F}_{t},P;K\right)  .$ For every $n\geq2$, Lemma 1 gives the
existence of an $n^{-1}$-approximate mild solution denoted by $\left(
\sigma_{n},u_{n},\varphi_{n},\psi_{n},Y_{n}\right)  $ and defined on $\left[
t,T\right]  .$ By definition, for every $n\geq2$ and every $s\in\left[
t,T\right]  ,$%
\begin{align}
Y_{n}(s) &  =S\left(  s-t\right)  \xi+\int_{t}^{s}S\left(  s-r\right)
Bu_{n}\left(  r\right)  dr+\int_{t}^{s}S\left(  s-r\right)  CY_{n}\left(
\sigma_{n}\left(  r\right)  \right)  dW_{r}\nonumber\\
&  \text{ \ \ }+\int_{t}^{s}S\left(  s-r\right)  Du_{n}\left(  r\right)
dW_{r}\nonumber\\
&  \text{ \ \ }+\int_{t}^{s}S\left(  \theta_{n}\left(  s,r\right)  \right)
\varphi_{n}\left(  r\right)  dr+\int_{t}^{s}S\left(  \theta_{n}\left(
s,r\right)  \right)  \psi_{n}\left(  r\right)  dW_{r},\label{Yn}%
\end{align}
$dP$-almost surely. The estimates of Proposition 2 yield
\[
\sup_{n\geq2}\sup_{s\in\left[  t,T\right]  }E\left[  \left\vert Y_{n}\left(
s\right)  \right\vert ^{2}\right]  \leq c,
\]
for a generic constant $c$. Moreover, since $U$ is bounded,
\[
\sup_{n\geq2}\sup_{s\in\left[  t,T\right]  }E\left[  \left\vert u_{n}\left(
s\right)  \right\vert ^{2}\right]  \leq c.
\]
The above estimates, together with the assumption (\ref{A3}), allow to find a
subsequence (still denoted by $\left(  Y_{n},u_{n}\right)  $), a process $Y\in
L^{2}\left(  \left[  t,T\right]  ;L^{2}\left(  \Omega;H\right)  \right)  $ and
an admissible control process $u$ such that $\left(  Y_{n},u_{n}\right)
\rightarrow\left(  Y,u\right)  $ in the weak topology on $L^{2}\left(  \left[
t,T\right]  ;L^{2}\left(  \Omega;H\times U\right)  \right)  .$

\underline{Step 1}. We begin by showing that $Y$ can be identified with
$X^{t,\xi,u}$ (the unique mild solution of (\ref{SDE2}) starting from $\xi$
$\ $and associated to the control process $u$)$.$ We make the following
notations%
\begin{align*}
&  M_{s}^{1,n}=\int_{t}^{s}S\left(  s-r\right)  Bu_{n}\left(  r\right)
dr,\text{ }M_{s}^{1}=\int_{t}^{s}S\left(  s-r\right)  Bu\left(  r\right)
dr,\\
&  M_{s}^{2,n}=\int_{t}^{s}S\left(  s-r\right)  CY_{n}\left(  \sigma
_{n}\left(  r\right)  \right)  dW_{r},\text{ }M_{s}^{2}=\int_{t}^{s}S\left(
s-r\right)  CY\left(  r\right)  dW_{r},\text{ }\\
&  M_{s}^{3,n}=\int_{t}^{s}S\left(  s-r\right)  Du_{n}\left(  r\right)
dW_{r},\text{ }M_{s}^{3}=\int_{t}^{s}S\left(  s-r\right)  Du\left(  r\right)
dW_{r},\\
&  M_{s}^{4,n}=\int_{t}^{s}S\left(  \theta_{n}\left(  s,r\right)  \right)
\varphi_{n}\left(  r\right)  dr+\int_{t}^{s}S\left(  \theta_{n}\left(
s,r\right)  \right)  \psi_{n}\left(  r\right)  dW_{r},\text{ for all }%
s\in\left[  t,T\right]  .
\end{align*}
Let us fix $\phi\in L^{2}\left(  \Omega,\mathcal{F}_{s},P;H\right)  .$ Using
the weak convergence of $\left(  u_{n}\right)  ,$ one gets
\begin{align*}
&  \lim_{n\rightarrow\infty}E\left[  \left\langle M_{s}^{1,n},\phi
\right\rangle \right]  =\lim_{n\rightarrow\infty}E\left[  \int_{t}%
^{s}\left\langle S\left(  s-r\right)  Bu_{n}\left(  r\right)  ,\phi
\right\rangle \right]  dr\\
&  =\lim_{n\rightarrow\infty}E\left[  \int_{t}^{s}\left\langle u_{n}\left(
r\right)  ,B^{\ast}S^{\ast}\left(  s-r\right)  \phi\right\rangle dr\right]  \\
&  =E\left[  \int_{t}^{s}\left\langle u\left(  r\right)  ,B^{\ast}S^{\ast
}\left(  s-r\right)  \phi\right\rangle dr\right]  \\
&  =E\left[  \left\langle M_{s}^{1},\phi\right\rangle \right]  .
\end{align*}
If $\Phi\in L^{2}\left(  \left[  t,T\right]  ;L^{2}\left(  \Omega;H\right)
\right)  $ is an $\left(  \mathcal{F}_{t}\right)  $-adapted process$,$ the
previous equality, combined with a dominated convergence argument, allows to
prove that%
\[
\lim_{n}E\left[  \int_{t}^{T}\left\langle M_{s}^{1,n},\Phi\left(  s\right)
\right\rangle ds\right]  =E\left[  \int_{t}^{T}\left\langle M_{s}^{1}%
,\Phi\left(  s\right)  \right\rangle ds\right]  .
\]
Since $\Phi$ is arbitrary, this proves that $M^{1,n}$ converges in the weak
topology on $L^{2}\left(  \left[  t,T\right]  ;L^{2}\left(  \Omega;H\right)
\right)  $ to $M^{1}.$ Using condition (g) in the Definition 3 of approximate
mild solutions, we get
\[
\sup_{s\in\left[  t,T\right]  }E\left[  \left\vert Y_{n}\left(  s\right)
-Y_{n}\left(  \sigma_{n}\left(  s\right)  \right)  \right\vert ^{2}\right]
\leq n^{-1}.
\]
Thus, in order to prove that $M^{2,n}$ converges in the weak topology on
$L^{2}\left(  \left[  t,T\right]  ;L^{2}\left(  \Omega;H\right)  \right)  $ to
$M^{2},$ one can replace $M^{2,n}$ by the process $N^{2,n}$ given by
\[
N_{s}^{2,n}=\int_{t}^{s}S\left(  s-r\right)  CY_{n}\left(  r\right)
dW_{r},\text{ for all }s\in\left[  t,T\right]  .
\]
If $\phi\in L^{2}\left(  \Omega,\mathcal{F}_{s},P;H\right)  ,$ using the
martingale representation theorem we prove
\[
\lim_{n\rightarrow\infty}E\left[  \left\langle N_{s}^{2,n},\phi\right\rangle
\right]  =E\left[  \left\langle M_{s}^{2},\phi\right\rangle \right]  .
\]
Using, as before, the dominated convergence theorem, we get that $N^{2,n}$
converges in the weak topology on $L^{2}\left(  \left[  t,T\right]
;L^{2}\left(  \Omega;H\right)  \right)  $ to $M^{2}.$ Similar arguments allow
to prove that $M^{3,n}$ converges in the weak topology on $L^{2}\left(
\left[  t,T\right]  ;L^{2}\left(  \Omega;H\right)  \right)  $ to $M^{3}.$ The
conditions (c) and (d) in the Definition 3 of approximate mild solutions imply
that $M^{4,n}$ converges strongly to $0$ in $L^{2}\left(  \left[  t,T\right]
;L^{2}\left(  \Omega;H\right)  \right)  .$ It follows that the limit $\left(
Y,u\right)  $ satisfies
\begin{align}
Y(s) &  =S\left(  s-t\right)  \xi+\int_{t}^{s}S\left(  s-r\right)  Bu\left(
r\right)  dr+\int_{t}^{s}S\left(  s-r\right)  CY\left(  r\right)
dW_{r}\nonumber\\
&  +\int_{t}^{s}S\left(  s-r\right)  Du\left(  r\right)  dW_{r}.\label{mild1}%
\end{align}
This equation is, a priori, satisfied $dPds-$almost everywhere. We can now
identify $Y$ with its continuous version $X^{t,\xi,u}$.

\underline{Step 2}. We claim that $Y_{s}\in K$ $dPds-$ almost everywhere on
$\Omega\times\left[  t,T\right]  .$ Indeed, let us consider the following
application $\gamma:L^{2}\left(  \left[  t,T\right]  ;L^{2}\left(
\Omega;H\right)  \right)  \longrightarrow%
\mathbb{R}
_{+},$%
\[
\gamma\left(  Z\right)  =E\left[  \int_{t}^{T}d_{K}^{2}\left(  Z\left(
s\right)  \right)  ds\right]  .
\]
Obviously, this application is convex$.$ Using the fact that $Y_{n}$ converges
in the weak topology on $L^{2}\left(  \left[  t,T\right]  ;L^{2}\left(
\Omega;H\right)  \right)  $ to $Y$, one finds a sequence of convex
combinations of $\left(  Y_{n}\right)  ,$ denoted by $Z_{n},$ which converges
strongly to $Y$ in $L^{2}\left(  \left[  t,T\right]  ;L^{2}\left(
\Omega;H\right)  \right)  .$ For every $n\geq2,$%
\[
\gamma\left(  Y_{n}\right)  \leq E\left[  \int_{t}^{T}\left\vert Y_{n}\left(
s\right)  -Y_{n}\left(  \sigma\left(  s\right)  \right)  \right\vert
^{2}ds\right]  \leq Tn^{-1}.
\]
Thus, using the convexity of $\gamma$, one can assume, without loss of
generality, that
\begin{equation}
\gamma\left(  Z_{n}\right)  \leq n^{-1}, \label{gammaZn}%
\end{equation}
for all $n\geq2$. Then%
\[
\gamma\left(  Y\right)  \leq2\left(  \gamma\left(  Z_{n}\right)  +E\left[
\int_{t}^{T}\left\vert Y\left(  s\right)  -Z_{n}\left(  s\right)  \right\vert
^{2}ds\right]  \right)  .
\]
We let $n\rightarrow\infty$ in the last inequality. Due to (\ref{gammaZn}) and
to the strong convergence of $Z_{n}$ to $Y$, one has
\[
\gamma\left(  Y\right)  =0.
\]
In other words, $Y\left(  s\right)  \in K,$ $dPds-$ almost everywhere on
$\Omega\times\left[  t,T\right]  .$ The conclusion follows from the continuity
of $Y$.
\end{proof}

\subsection{Nagumo's stochastic theorem. Viability of smooth sets}

We assume that $A$ is a linear operator on $H$ that generates a semigroup of
continuous operators denoted by $\left(  S\left(  t\right)  \right)  _{t\geq
0}$. We consider $F:H\longrightarrow H$ and $G:H\longrightarrow L_{2}\left(
\Xi;H\right)  $ such that, for some positive constant $c>0,$%
\begin{equation}
\left\vert F\left(  x\right)  -F\left(  y\right)  \right\vert +\left\vert
G(x)-G(y)\right\vert _{L_{2}\left(  \Xi;H\right)  }\leq c\left\vert
x-y\right\vert . \tag{B}\label{B1}%
\end{equation}
for all $x,y\in H.$ We consider the stochastic semilinear equation%
\begin{equation}
\left\{
\begin{array}
[c]{c}%
dX^{t,\xi}(s)=\left(  AX^{t,\xi}(s)+F\left(  X^{t,\xi}(s)\right)  \right)
ds\\
+G\left(  X^{t,\xi}(s)\right)  dW_{s},\text{ for all }s\in\left[  t,T\right]
,\\
X^{t,\xi}(t)=\xi\in L^{2}\left(  \Omega,\mathcal{F}_{t},P;H\right)  .
\end{array}
\right.  \label{SDE3}%
\end{equation}
The aim of this subsection is to deduce explicit conditions for the viability
of smooth sets with respect to the system (\ref{SDE3}). In particular, we
study the viability of the closed unit ball $\overline{B}\left(  0,1\right)
.$

We begin by stating the following version of Nagumo's stochastic theorem.

\begin{theorem}
(Nagumo's stochastic theorem) A nonempty, closed set $K\subset H$ is viable
with respect to (\ref{SDE3}) if and only if, for every $t\in\left[
0,T\right)  $ and every $\xi\in L^{2}\left(  \Omega,\mathcal{F}_{t}%
,P;K\right)  ,$ there exist a sequence $h_{n}\searrow0$ and a sequence of
random variables $p_{n}\in L^{2}\left(  \Omega,\mathcal{F}_{t+h_{n}%
},P;H\right)  $ such that
\[
\lim_{n}E\left[  \left\vert p_{n}\right\vert ^{2}\right]  +\frac{1}{h_{n}%
}E\left[  \left\vert E^{\mathcal{F}_{t}}\left[  p_{n}\right]  \right\vert
^{2}\right]  =0
\]
and $S\left(  h_{n}\right)  \xi+h_{n}F\left(  \xi\right)  $ $+G\left(
\xi\right)  \left(  W_{t+h_{n}}-W_{t}\right)  +\sqrt{h_{n}}p_{n}\in K,$
$dP-$almost surely, for all $n.$
\end{theorem}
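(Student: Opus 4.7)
The plan is to reduce Nagumo's stochastic theorem to the sequential form of quasi-tangency (Proposition 1) applied with $\lambda=0$ and $\gamma=0$, which is legitimate because assumption (\ref{B1}) makes $F$ Lipschitz and $G$ a Lipschitz $L_{2}(\Xi;H)$-valued map. The crucial observation is that the two candidate ``approximate trajectories'',
\[
S(h)\xi+\!\int_{t}^{t+h}\!S(t+h-s)F(\xi)\,ds+\!\int_{t}^{t+h}\!S(t+h-s)G(\xi)\,dW_{s}
\qquad \text{and}\qquad
S(h)\xi+hF(\xi)+G(\xi)(W_{t+h}-W_{t}),
\]
differ by $\int_{t}^{t+h}(S(t+h-s)-I)F(\xi)\,ds+\int_{t}^{t+h}(S(t+h-s)-I)G(\xi)\,dW_{s}$, an error that can be transferred into the $\sqrt{h}\,p$ remainder without destroying the estimates in Proposition 1(a).

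For the ``only if'' direction, viability trivially implies $\varepsilon$-viability, so by Theorem 1 combined with Proposition 1 there exist $h_{n}\searrow0$ and $p_{n}$ with $E[|p_{n}|^{2}]+h_{n}^{-1}E[|E^{\mathcal{F}_{t}}[p_{n}]|^{2}]\to 0$ satisfying the quasi-tangency inclusion. Setting
\[
\widetilde{p}_{n}=p_{n}+\tfrac{1}{\sqrt{h_{n}}}\!\int_{t}^{t+h_{n}}\!(S(t+h_{n}-s)-I)F(\xi)\,ds+\tfrac{1}{\sqrt{h_{n}}}\!\int_{t}^{t+h_{n}}\!(S(t+h_{n}-s)-I)G(\xi)\,dW_{s},
\]
one recovers the Nagumo inclusion $S(h_{n})\xi+h_{n}F(\xi)+G(\xi)(W_{t+h_{n}}-W_{t})+\sqrt{h_{n}}\widetilde{p}_{n}\in K$. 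I would then check the two required convergences: Cauchy--Schwarz applied to the drift correction and Itô's isometry applied to the stochastic correction control $E[|\widetilde{p}_{n}-p_{n}|^{2}]$ by $\sup_{r\in(0,h_{n})}E[|(I-S(r))F(\xi)|^{2}]+\sup_{r\in(0,h_{n})}E[|(I-S(r))G(\xi)|_{L_{2}(\Xi;H)}^{2}]$, which tends to $0$ by strong continuity of $S$ and a dominated convergence argument using the $L^{2}$ integrability of $|F(\xi)|$ and $|G(\xi)|_{L_{2}(\Xi;H)}$. For the conditional expectation estimate only the (deterministic-in-$\mathcal{F}_{t}$) drift correction persists since the stochastic integral is $\mathcal{F}_{t}$-centered; the same bound, divided by $h_{n}$, yields the second convergence.

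For the ``if'' direction, I would run the analogous substitution in reverse, with opposite signs, to deduce from the Nagumo condition that the quasi-tangency condition (\ref{QTC1}) with $\lambda=0$ is satisfied. Theorem 2 then yields $\varepsilon$-viability of $K$. The final step uses that (\ref{SDE3}) has no control parameter: for every $\xi\in L^{2}(\Omega,\mathcal{F}_{t},P;K)$ the mild solution $X^{t,\xi}$ is uniquely determined, so $\varepsilon$-viability reduces to $d\bigl(X^{t,\xi}(s),L^{2}(\Omega,\mathcal{F}_{s},P;K)\bigr)=0$ for every $s\in[t,T]$, which is equivalent to $X^{t,\xi}(s)\in K$, $dP$-a.s.; this is exactly viability.

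The main obstacle is the uniform convergence $\sup_{r\in(0,h_{n})}E[|(I-S(r))G(\xi)|_{L_{2}(\Xi;H)}^{2}]\to 0$: since $G(\xi)$ is a random Hilbert--Schmidt operator, one must apply dominated convergence twice, once in $\omega$ and once in the Hilbert--Schmidt basis, using the local uniform bound $\sup_{r\in[0,1]}\|S(r)\|_{\mathcal{L}(H)}\leq M$ together with $E[|G(\xi)|_{L_{2}(\Xi;H)}^{2}]<\infty$ from (\ref{B1}). Once this estimate is in hand, the remaining computations are routine Itô isometry and Cauchy--Schwarz bookkeeping.
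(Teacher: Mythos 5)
Your proposal is correct and follows exactly the route the paper intends: the paper dismisses this theorem with the single line ``This result is a simple consequence of Theorems 1 and 2,'' and your argument supplies precisely the omitted details --- absorbing the semigroup corrections $\int_{t}^{t+h}(S(t+h-s)-I)F(\xi)\,ds$ and $\int_{t}^{t+h}(S(t+h-s)-I)G(\xi)\,dW_{s}$ into the remainder $\sqrt{h}\,p$ (with the right $O(h)$ versus $O(1)$ bookkeeping for the conditional-expectation term), invoking the sequential form of quasi-tangency with $\lambda=\gamma=0$ as permitted by (\ref{B1}), and observing that for the uncontrolled system (\ref{SDE3}) the infimum over $\mathcal{A}$ is vacuous so that $\varepsilon$-viability collapses to viability via the closedness of $L^{2}(\Omega,\mathcal{F}_{s},P;K)$. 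No gaps.
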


This result is a simple consequence of Theorems 1 and 2.

We consider some orthonormal basis in $\Xi,$ denoted by $\left(  e_{m}\right)
_{m\in%
\mathbb{N}
^{\ast}}$. We introduce, for every $m\in%
\mathbb{N}
^{\ast},$ $\beta_{\cdot}^{m}=\left\langle W_{\cdot},e_{m}\right\rangle .$ This
defines a sequence of independent standard $1$-dimensional Brownian motions.
We also consider, for every $m\in%
\mathbb{N}
^{\ast}$, the $m-$dimensional Brownian motion $W_{\cdot}^{m}=%
{\textstyle\sum\limits_{i=1}^{m}}
\beta_{\cdot}^{i}e_{i}$. For every $m\geq1$, we denote by $\left\{
\mathcal{F}_{t}^{m}:t\geq0\right\}  $ the filtration generated by the
one-dimensional Brownian motion $\beta_{\cdot}^{m}$ completed by the $P$-null
sets and by $\left\{  \mathcal{F}_{t}^{1,m}:t\geq0\right\}  $ the filtration
generated by the finite-dimensional Brownian motion $W_{\cdot}^{m}$ completed
by the $P$-null sets.

We will show how deterministic criteria can be inferred from the
quasi-tangency condition for smooth sets of constraints. To be more precise,
we consider sets of constraints $K$ that can be written as%
\begin{equation}
K=\left\{  x\in H:\varphi\left(  x\right)  \leq0\right\}  , \label{Kphi}%
\end{equation}
for some $C^{2}$-function $\varphi:H\longrightarrow%
\mathbb{R}
$ with bounded, Lipschitz-continuous second order derivative.

\begin{proposition}
Under the assumption (B), if the set $K$ given by (\ref{Kphi}) is viable with
respect to (\ref{SDE3}), then, for every $x\in D\left(  A\right)  $ such that
$\varphi\left(  x\right)  =0,$%
\begin{align}
0  &  \geq\left\langle D\varphi\left(  x\right)  ,Ax\right\rangle
+\left\langle D\varphi\left(  x\right)  ,F\left(  x\right)  \right\rangle
\nonumber\\
&  +\frac{1}{2}\sum_{i=1}^{\infty}\left\langle D^{2}\varphi\left(  x\right)
G\left(  x\right)  e_{i},G\left(  x\right)  e_{i}\right\rangle , \label{dn1}%
\end{align}
and
\begin{equation}
G^{\ast}\left(  x\right)  D\varphi\left(  x\right)  =0. \label{dn2}%
\end{equation}

\end{proposition}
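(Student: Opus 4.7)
The plan is to combine Nagumo's characterization (Theorem 4) with a second-order Taylor expansion of $\varphi$ at $x$. Fix $x\in D(A)$ with $\varphi(x)=0$. Applying Theorem 4 at $t=0$ with $\xi=x$ produces sequences $h_n\searrow 0$ and $p_n\in L^2(\Omega,\mathcal{F}_{h_n},P;H)$ satisfying $E[|p_n|^2]\to 0$ and $h_n^{-1}|E[p_n]|^2\to 0$, such that
$$x+y_n:=S(h_n)x+h_nF(x)+G(x)W_{h_n}+\sqrt{h_n}\,p_n\in K,\quad dP\text{-a.s.}$$
Since full viability is assumed and $x$ is deterministic, Remark 2(a) also allows one to assume $\sup_n E[|p_n|^3]<\infty$. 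Taylor's formula gives
$$\varphi(x+y_n)=\langle D\varphi(x),y_n\rangle+\tfrac{1}{2}\langle D^2\varphi(x)y_n,y_n\rangle+R_n,\qquad |R_n|\le C|y_n|^3,$$
and $x\in D(A)$ yields $S(h_n)x-x=h_nAx+o(h_n)$.

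I would first derive (\ref{dn2}). Set $Z_n=\langle D\varphi(x),G(x)W_{h_n}\rangle$, which is a centered Gaussian of variance $h_n|G^*(x)D\varphi(x)|^2$, so $E[Z_n^+]=(2\pi)^{-1/2}\sqrt{h_n}\,|G^*(x)D\varphi(x)|$. Writing $\varphi(x+y_n)=Z_n+R'_n$, the pointwise inequality $\varphi(x+y_n)\le 0$ forces $Z_n^+\le|R'_n|$ pathwise, hence $E[Z_n^+]\le E[|R'_n|]$. One checks that every term making up $R'_n$ is $o(\sqrt{h_n})$ in $L^1$: the deterministic semigroup and drift pieces are $O(h_n)$; the term $\sqrt{h_n}\langle D\varphi(x),p_n\rangle$ is bounded by $\sqrt{h_n}\,\|p_n\|_2=o(\sqrt{h_n})$; the quadratic Taylor term is $O(h_n)$; and $E[|R_n|]\le CE[|y_n|^3]=O(h_n^{3/2})$ using the $L^3$-bound on $p_n$. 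This forces $|G^*(x)D\varphi(x)|=0$, which is (\ref{dn2}).

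To derive (\ref{dn1}), now that $G^*(x)D\varphi(x)=0$, I would take the expectation of $\varphi(x+y_n)\le 0$ and divide by $h_n$. The Wiener part of $E[\langle D\varphi(x),y_n\rangle]$ vanishes and, using $x\in D(A)$ together with $|E[p_n]|=o(\sqrt{h_n})$, one obtains $E[\langle D\varphi(x),y_n\rangle]=h_n\bigl(\langle D\varphi(x),Ax\rangle+\langle D\varphi(x),F(x)\rangle\bigr)+o(h_n)$. The dominant contribution to $\tfrac{1}{2}E[\langle D^2\varphi(x)y_n,y_n\rangle]$ is the Wiener--Wiener one, equal to $\tfrac{h_n}{2}\sum_i\langle D^2\varphi(x)G(x)e_i,G(x)e_i\rangle$, all other cross terms being $o(h_n)$ by Cauchy--Schwarz. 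Finally $E[R_n]=O(h_n^{3/2})=o(h_n)$. Dividing by $h_n$ and letting $n\to\infty$ yields (\ref{dn1}).

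The main technical obstacle is the uniform control of the cubic remainder $R_n$ and of the cross terms coupling $G(x)W_{h_n}$ with $\sqrt{h_n}\,p_n$; both rely on the improved $L^3$-integrability of $p_n$ furnished by Remark 2(a), which is available precisely because we have assumed viability (not merely $\varepsilon$-viability) and the initial datum is deterministic.
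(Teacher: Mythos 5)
Your proof is correct, and it reaches both conclusions by a route that is genuinely different from the paper's in its two key steps. The paper first represents $p_n$ via the martingale representation theorem, builds the interpolating process $Y^{x,n}_s$, and applies It\^{o}'s formula to $\varphi(Y^{x,n}_\cdot)$; you instead apply a second-order Taylor expansion of $\varphi$ directly to the terminal random variable $x+y_n$, which avoids the martingale representation entirely. For (\ref{dn2}) the divergence is sharpest: the paper divides the It\^{o} identity by $\sqrt{h_n}$, takes conditional expectation with respect to $\mathcal{F}_\infty^m$ to isolate the coordinate $\left\langle D\varphi(x),G(x)e_m\right\rangle \beta^m_{h_n}/\sqrt{h_n}$, and then invokes pathwise properties of Brownian motion; your observation that $E\left[Z_n^+\right]=(2\pi)^{-1/2}\sqrt{h_n}\left\vert G^\ast(x)D\varphi(x)\right\vert$ exactly, combined with the pathwise bound $Z_n^+\leq\left\vert R_n'\right\vert$ and the $o(\sqrt{h_n})$ estimate on $E\left[\left\vert R_n'\right\vert\right]$, gives the full vector identity in one stroke and is arguably cleaner. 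Your derivation of (\ref{dn1}) by taking expectations, extracting the Gaussian trace term $h_n\sum_i\left\langle D^2\varphi(x)G(x)e_i,G(x)e_i\right\rangle$, and killing the cross terms by Cauchy--Schwarz parallels the paper's expectation step. Two small points you should make explicit: first, the order of your two steps can be swapped freely, since the Wiener part of $E\left[\left\langle D\varphi(x),y_n\right\rangle\right]$ vanishes because it is centered, not because of (\ref{dn2}); second, the Remark only asserts that each $p_{h}$ lies in $L^k$, so you should note (as the paper also implicitly does when it claims $E[\sup_s\vert Y^{x,n}_s\vert^4]\leq C$) that the construction $\eta=X^{t,x,u}_{t+h}$ under full viability in fact yields a bound on $E\left[\left\vert p_{h_n}\right\vert^3\right]$ that is uniform in $n$, which is what your control of the cubic remainder actually requires.
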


\begin{proof}
If the set $K$ is viable, and $x\in D\left(  A\right)  $ such that
$\varphi\left(  x\right)  =0,$ for every sequence $h_{n}\searrow0$ we get, by
Nagumo's stochastic theorem (see also Remark 3), the existence of some
sequence $p_{n}\in L^{2}\left(  \Omega,\mathcal{F}_{h_{n}},P;H\right)  $ such
that
\[
\left\{
\begin{array}
[c]{l}%
\lim_{n}\left(  E\left[  \left\vert p_{n}\right\vert ^{2}\right]  +\frac
{1}{h_{n}}E\left[  \left\vert E^{\mathcal{F}_{t}}\left[  p_{n}\right]
\right\vert ^{2}\right]  \right)  =0\text{ and }\\
S\left(  h_{n}\right)  x+h_{n}F\left(  x\right)  +G\left(  x\right)  W_{h_{n}%
}+\sqrt{h_{n}}p_{n}\in K,\text{ }dP\text{-a.s.,}%
\end{array}
\right.
\]
for every $n\geq1.$ Moreover, we can assume (cf. Remark 3) that $p_{n}\in
L^{4}\left(  \Omega,\mathcal{F}_{h_{n}},P;H\right)  $, for every $n\geq1.$
Using the martingale representation theorem, there exists an $\left(
\mathcal{F}\right)  $-predictable process $q_{n}$ such that $p_{n}=E\left[
p_{n}\right]  +\int_{0}^{h_{n}}q_{n}\left(  s\right)  dW_{s}.$ Moreover, by
standard estimates,%
\begin{equation}
E\left[  \left(  \int_{0}^{h_{n}}\left\vert q_{n}\left(  s\right)  \right\vert
^{2}ds\right)  ^{2}\right]  \leq CE\left[  \left\vert p_{n}\right\vert
^{4}\right]  , \label{qn}%
\end{equation}
for some generic constant $C$ independent of $p_{n}$ and $h_{n}.$ For every
$n\in%
\mathbb{N}
^{\ast}$, we introduce the process $Y_{\cdot}^{x,n}$ given by
\[
Y_{s}^{x,n}=S\left(  h_{n}\right)  x+h_{n}F\left(  x\right)  +G\left(
x\right)  W_{s}+\sqrt{h_{n}}E\left[  p_{n}\right]  +\sqrt{h_{n}}\int_{0}%
^{s}q_{n}\left(  r\right)  dW_{r},
\]
for every $s\in\left[  0,h_{n}\right]  $. It is obvious from (\ref{qn}) that
\[
E\left[  \sup_{s\in\left[  0,h_{n}\right]  }\left\vert Y_{s}^{x,n}\right\vert
^{4}\right]  \leq C.
\]
We apply It\^{o}'s formula to $Y^{x,n}$ to get%
\begin{align}
&  \varphi\left(  Y_{h_{n}}^{x,n}\right) \nonumber\\
&  =\varphi\left(  S\left(  h_{n}\right)  x+h_{n}F\left(  x\right)
+\sqrt{h_{n}}E\left[  p_{n}\right]  \right) \nonumber\\
&  +\int_{0}^{h_{n}}\left\langle D\varphi\left(  Y_{s}^{x,n}\right)  ,\left(
G\left(  x\right)  +\sqrt{h_{n}}q_{n}\left(  s\right)  \right)  dW_{s}%
\right\rangle \nonumber\\
&  +\frac{1}{2}\int_{0}^{h_{n}}\sum_{i=1}^{\infty}\left\langle D^{2}%
\varphi\left(  Y_{s}^{x,n}\right)  \left(  G\left(  x\right)  +\sqrt{h_{n}%
}q_{n}\left(  s\right)  \right)  e_{i},\left(  G\left(  x\right)  +\sqrt
{h_{n}}q_{n}\left(  s\right)  \right)  e_{i}\right\rangle ds. \label{d0}%
\end{align}
Using a first order Taylor expansion of $\varphi$ gives
\begin{align}
&  \varphi\left(  S\left(  h_{n}\right)  x+h_{n}F\left(  x\right)
+\sqrt{h_{n}}E\left[  p_{n}\right]  \right) \nonumber\\
&  \geq\left\langle D\varphi\left(  x\right)  ,\left(  S\left(  h_{n}\right)
-I\right)  x+h_{n}F\left(  x\right)  \right\rangle -h_{n}P_{n}^{1}, \label{d1}%
\end{align}
where $P_{n}^{1}\in%
\mathbb{R}
_{+}$ such that $\lim_{n}P_{n}^{1}=0.$ Moreover, by standard estimates,
\begin{align}
&  \int_{0}^{h_{n}}\sum_{i=1}^{\infty}\left\langle D^{2}\varphi\left(
Y_{s}^{x,n}\right)  \left(  G\left(  x\right)  +\sqrt{h_{n}}q_{n}\left(
s\right)  \right)  e_{i},\left(  G\left(  x\right)  +\sqrt{h_{n}}q_{n}\left(
s\right)  \right)  e_{i}\right\rangle ds\nonumber\\
&  \geq h_{n}\sum_{i=1}^{\infty}\left\langle D^{2}\varphi\left(  x\right)
G\left(  x\right)  e_{i},G\left(  x\right)  e_{i}\right\rangle -h_{n}P_{n}%
^{2}, \label{d2}%
\end{align}
where $P_{n}^{2}\in L^{1}\left(  \Omega,\mathcal{F}_{h_{n}},P;%
\mathbb{R}
\right)  $ such that $lim_{n}E\left[  \left\vert P_{n}^{2}\right\vert \right]
=0.$ Substituting (\ref{d1}) and (\ref{d2}) in (\ref{d0}) and taking
expectation, we get
\begin{align*}
0  &  \geq E\left[  \varphi\left(  Y_{h_{n}}^{x,n}\right)  \right]
\geq\left\langle D\varphi\left(  x\right)  ,\left(  S\left(  h_{n}\right)
-I\right)  x+h_{n}F\left(  x\right)  \right\rangle \\
&  \text{ \ \ }+h_{n}\sum_{i=1}^{\infty}\left\langle D^{2}\varphi\left(
x\right)  G\left(  x\right)  e_{i},G\left(  x\right)  e_{i}\right\rangle
-h_{n}\left(  P_{n}^{1}+P_{n}^{2}\right)  .
\end{align*}
The condition (\ref{dn1}) follows by dividing the last inequality by $h_{n}$
and letting $n\rightarrow\infty.$

Standard estimates and (\ref{d0}) yield%
\begin{align}
0  &  \geq\varphi\left(  Y_{h_{n}}^{x,n}\right)  =\int_{0}^{h_{n}}\left\langle
D\varphi\left(  Y_{s}^{x,n}\right)  ,\left(  G\left(  x\right)  +\sqrt{h_{n}%
}q_{n}\left(  s\right)  \right)  dW_{s}\right\rangle \nonumber\\
&  \text{ \ \ \ \ \ \ \ \ \ \ \ \ \ \ }-\sqrt{h_{n}}Q_{n}^{1}, \label{d3'}%
\end{align}
where $Q_{n}^{1}\in L^{1}\left(  \Omega,\mathcal{F}_{h_{n}},P;%
\mathbb{R}
\right)  $ such that $lim_{n}E\left[  \left\vert Q_{n}^{1}\right\vert \right]
=0.$ Moreover, again by standard estimates,
\begin{align}
&  \int_{0}^{h_{n}}\left\langle D\varphi\left(  Y_{s}^{x,n}\right)  ,\left(
G\left(  x\right)  +\sqrt{h_{n}}q_{n}\left(  s\right)  \right)  dW_{s}%
\right\rangle \nonumber\\
&  =\left\langle D\varphi\left(  x\right)  ,G\left(  x\right)  W_{h_{n}%
}\right\rangle -\sqrt{h_{n}}Q_{n}^{2}, \label{d4}%
\end{align}
where $Q_{n}^{2}\in L^{1}\left(  \Omega,\mathcal{F}_{h_{n}},P;H\right)  $ such
that $lim_{n}E\left[  \left\vert Q_{n}^{2}\right\vert \right]  =0.$ Then, by
substituting (\ref{d4}) in (\ref{d3'}) and dividing by $\sqrt{h_{n}}$ we get%
\[
Q_{n}^{1}+Q_{n}^{2}\geq\frac{1}{\sqrt{h_{n}}}\left\langle D\varphi\left(
x\right)  ,G\left(  x\right)  W_{h_{n}}\right\rangle .
\]
For every $m\in%
\mathbb{N}
^{\ast},$ taking conditional expectation with respect to $\mathcal{F}_{\infty
}^{m}$ yields
\[
Q_{n}\geq\left\langle D\varphi\left(  x\right)  ,G\left(  x\right)
e_{m}\right\rangle \frac{\beta_{h_{n}}^{m}}{\sqrt{h_{n}}},
\]
for some process $Q_{n}\in L^{1}\left(  \Omega,\mathcal{F}_{h_{n}},P;H\right)
$ such that $lim_{n}E\left[  \left\vert Q_{n}\right\vert \right]  =0.$
Standard properties of Brownian sample paths imply that the above inequality
can only hold if (\ref{dn2}) holds true. The proof is now complete.
\end{proof}

Using the above result, we are going to deduce explicit deterministic
conditions for the viability of the unit ball. In the case of a diagonal
operator $A$, using the quasi-tangency characterization of viability, we prove
that these conditions are also sufficient.

\begin{proposition}
If the unit ball $\overline{B}\left(  0,1\right)  $ is viable with respect to
(\ref{SDE3}), then, for every $x\in D\left(  A\right)  $ such that $\left\vert
x\right\vert =1,$
\begin{equation}
\left\{
\begin{array}
[c]{l}%
0\geq\left\langle x,Ax\right\rangle +\left\langle x,F\left(  x\right)
\right\rangle +\frac{1}{2}\left\vert G\left(  x\right)  \right\vert
_{L_{2}\left(  \Xi;H\right)  }^{2},\text{ and}\\
G^{\ast}\left(  x\right)  x=0.
\end{array}
\right.  \label{n}%
\end{equation}
Conversely, if $A$ is a diagonal operator, this condition is also sufficient
for the viability of $\overline{B}\left(  0,1\right)  $.
\end{proposition}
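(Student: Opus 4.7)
The necessity follows directly from the preceding proposition applied to $\varphi(x)=|x|^2-1$, which satisfies $\overline{B}(0,1)=\{\varphi\leq 0\}$. Since $D\varphi(x)=2x$ and $D^2\varphi(x)=2I$ is constant (in particular bounded and Lipschitz), the hypotheses are satisfied. For $x\in D(A)$ with $|x|=1$, i.e.\ $\varphi(x)=0$, the inequality (\ref{dn1}) reads
\[
0\geq 2\langle x,Ax\rangle+2\langle x,F(x)\rangle+\sum_{i=1}^{\infty}|G(x)e_i|^2,
\]
which after division by $2$ is the first line of (\ref{n}); condition (\ref{dn2}) becomes $2G^*(x)x=0$, equivalent to $G^*(x)x=0$.

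For the converse, assume $A$ is diagonal in a Hilbert basis $(\phi_k)_{k\geq 1}$ with eigenvalues $(\lambda_k)$ and that (\ref{n}) holds. I would verify the hypothesis of Nagumo's stochastic theorem: fix $t\in[0,T)$ and $\xi\in L^2(\Omega,\mathcal{F}_t,P;\overline{B}(0,1))$, choose $h_n\searrow 0$, and define
\[
\zeta_n := S(h_n)\xi+h_n F(\xi)+G(\xi)(W_{t+h_n}-W_t), \qquad p_n := h_n^{-1/2}\bigl(\pi_K(\zeta_n)-\zeta_n\bigr),
\]
where $\pi_K$ denotes the projection onto the closed convex set $\overline{B}(0,1)$. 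By construction $\zeta_n+\sqrt{h_n}\,p_n=\pi_K(\zeta_n)\in\overline{B}(0,1)$ almost surely, and $|p_n|^2=h_n^{-1}d_K^2(\zeta_n)\leq h_n^{-1}(|\zeta_n|^2-1)_+$; hence the proof reduces to the two estimates $E[(|\zeta_n|^2-1)_+]=o(h_n)$ and $E[|E^{\mathcal{F}_t}[p_n]|^2]=o(h_n)$.

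Writing $\Delta W:=W_{t+h_n}-W_t$ and expanding the square, the terms of order at most $h_n$ in $|\zeta_n|^2-|\xi|^2$ are $|S(h_n)\xi|^2-|\xi|^2$, $2h_n\langle\xi,F(\xi)\rangle$, $|G(\xi)\Delta W|^2$, and the martingale cross-term $2\langle\xi,G(\xi)\Delta W\rangle$; all remaining terms are $O(h_n^{3/2})$ thanks to (\ref{B1}). Since $x\mapsto G^*(x)x$ is continuous and vanishes on the dense subset $D(A)\cap\partial\overline{B}(0,1)$, it vanishes on all of $\partial\overline{B}(0,1)$; hence the cross-term is zero on $\{|\xi|=1\}$, while on $\{|\xi|<1\}$ the slack $1-|\xi|^2$ absorbs it asymptotically. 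Taking expectations, $E[|G(\xi)\Delta W|^2]=h_n E[|G(\xi)|^2_{L_2(\Xi;H)}]$. The semigroup contribution is the delicate point: decomposing $\xi=\sum_k a_k\phi_k$ gives $|S(h_n)\xi|^2-|\xi|^2=\sum_k(e^{2\lambda_k h_n}-1)a_k^2$, and one needs to compare $\sum_k\lambda_k a_k^2$ with the bound in (\ref{n}) even when $\xi\notin D(A)$. Here the diagonal structure is decisive: Galerkin truncation to $\xi_N:=\sum_{k\leq N}a_k\phi_k$ (normalized when $|\xi|=1$) places the problem in $D(A)$, where (\ref{n}) applies; passing $N\to\infty$ via monotone convergence yields the drift inequality $\sum_k\lambda_k a_k^2\leq-\langle\xi,F(\xi)\rangle-\tfrac{1}{2}|G(\xi)|^2_{L_2(\Xi;H)}$ on $\partial\overline{B}(0,1)$. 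Assembling these pieces gives $E[(|\zeta_n|^2-1)_+]=o(h_n)$. The conditional bound is easier, because $E^{\mathcal{F}_t}[\zeta_n]=S(h_n)\xi+h_n F(\xi)$ is deterministic given $\mathcal{F}_t$ and its squared distance to $K$ is already $O(h_n^2)$ by the drift inequality alone. The main obstacle is precisely this non-continuity of $x\mapsto\langle x,Ax\rangle$ on $H$, circumvented by the diagonalizability hypothesis on $A$.
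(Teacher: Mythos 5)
Your necessity argument is correct and is exactly the paper's: apply Proposition 4 with $\varphi(x)=\left\vert x\right\vert ^{2}-1$, note $D\varphi(x)=2x$, $D^{2}\varphi=2I$, and divide by $2$. The converse is where the proposal breaks down, and the failure is concrete. Both of your key estimates are defeated by the same phenomenon: writing $\Delta W:=W_{t+h_{n}}-W_{t}$, the It\^{o} term $\left\vert G(\xi)\Delta W\right\vert ^{2}$ has conditional mean $h_{n}\left\vert G(\xi)\right\vert _{L_{2}(\Xi;H)}^{2}$ but also fluctuations around that mean of the \emph{same} order $h_{n}$, and neither the positive part nor the projection averages these fluctuations away. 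First, the reduction to $E[(\left\vert \zeta_{n}\right\vert ^{2}-1)_{+}]=o(h_{n})$ asks for something false: on $\{\left\vert \xi\right\vert =1\}$ one has $\left\vert \zeta_{n}\right\vert ^{2}-1=2h_{n}(\langle\xi,A\xi\rangle+\langle\xi,F(\xi)\rangle)+\left\vert G(\xi)\Delta W\right\vert ^{2}+o(h_{n})$, so even when (\ref{n}) holds with equality you are left with $E[(\left\vert G(\xi)\Delta W\right\vert ^{2}-h_{n}\left\vert G(\xi)\right\vert _{L_{2}}^{2})_{+}]$, which is a positive constant times $h_{n}$ whenever $G(\xi)\neq0$ (with one noise mode it equals $h_{n}E[(Z^{2}-1)_{+}]$, $Z$ standard Gaussian). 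This particular defect could be repaired by using $d_{K}^{2}(\zeta_{n})=((\left\vert \zeta_{n}\right\vert -1)_{+})^{2}\leq\frac{1}{4}((\left\vert \zeta_{n}\right\vert ^{2}-1)_{+})^{2}$, since the \emph{square} of the fluctuation is $O(h_{n}^{2})$.

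The second defect is not reparable within your scheme: $p_{n}=h_{n}^{-1/2}(\pi_{K}(\zeta_{n})-\zeta_{n})$ fails the conditional-expectation half of the quasi-tangency condition. You tacitly replace $E^{\mathcal{F}_{t}}[\pi_{K}(\zeta_{n})]$ by $\pi_{K}(E^{\mathcal{F}_{t}}[\zeta_{n}])$; these differ at order $h_{n}$ because the projection always pushes inward, so its conditional mean carries a systematic radial bias of size $E^{\mathcal{F}_{t}}[d_{K}(\zeta_{n})]\asymp h_{n}$. Take $H=\mathbb{R}^{2}$, $\Xi=\mathbb{R}$, $A=0$, $F(x)=-\frac{1}{2}x$, $G(x)w=x^{\perp}w$: condition (\ref{n}) holds and the circle is genuinely viable, yet $\left\vert \zeta_{n}\right\vert ^{2}=1+(\Delta\beta)^{2}-h_{n}+O(h_{n}^{2})$ and the component of $E^{\mathcal{F}_{t}}[\pi_{K}(\zeta_{n})-\zeta_{n}]$ along $\xi$ is $-\frac{1}{2}E[((\Delta\beta)^{2}-h_{n})_{+}]+o(h_{n})\asymp h_{n}$, so $h_{n}^{-1}E[\left\vert E^{\mathcal{F}_{t}}[p_{n}]\right\vert ^{2}]$ tends to a strictly positive constant and Nagumo's criterion is not verified by your $p_{n}$. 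A valid quasi-tangent element must have conditional mean matching the Euler drift to order $o(h_{n})$, which is what one gets from the time-$(t+h_{n})$ value of an actual viable trajectory (the stochastic integral being a martingale), not from a pointwise projection. This is precisely why the paper does not argue directly: it projects the state onto the span of the first $l$ eigenvectors of $A$ and the noise onto the first $m$ modes, invokes the finite-dimensional Aubin--Da Prato theorem to obtain genuine viable solutions of the truncated systems, applies the necessity direction (Theorem 1) to those solutions to produce $p_{l,m,h}$ with bounds uniform in $l,m$, and then passes to the weak limit in $L^{2}$. You should either follow that route or substantially redesign the construction of $p_{n}$; the Galerkin idea in your last step is in the right spirit but is applied to the wrong object (the drift inequality rather than the tangent sequence itself).
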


\begin{proof}
That viability implies (\ref{n}) is a simple consequence of Proposition 4.
Conversely, we assume that (\ref{n}) holds true whenever $x\in D\left(
A\right)  $ is such that $\left\vert x\right\vert =1.$ We fix $t\in\left[
0,T\right)  $ and $\xi\in L^{2}\left(  \Omega,\mathcal{F}_{t},P;\overline
{B}\left(  0,1\right)  \right)  .$ For every $l\geq1,$ we denote by $J_{l}$
the orthogonal projector onto the space $H_{l}$ generated by the first $l$
eigenvectors of $A.$ We fix, for the moment $m,l\geq1.$ For every $t\geq0$ and
every $\zeta\in L^{2}\left(  \Omega,\mathcal{F}_{t}^{1,m},P;H_{l}\right)  $,
we define the system
\[
\left\{
\begin{array}
[c]{l}%
X_{s}^{t,\zeta,m,l}=\left(  A_{l}X_{s}^{t,\zeta,m,l}+J_{l}F\left(
X_{s}^{t,\zeta,m,l}\right)  \right)  ds+J_{l}G\left(  X_{s}^{t,\zeta
,m,l}\right)  dW_{s}^{m},\text{ }s\in\left[  t,T\right]  ,\\
X_{t}^{t,\xi,m,l}=\zeta.
\end{array}
\right.
\]
Obviously, the solution lives in the finite dimensional space $H_{l}$ and the
Brownian motion $W_{\cdot}^{m}$ is finite dimensional. Consequently, the
condition (\ref{n}) and classical results due to Aubin, Da Prato in the finite
dimensional framework (cf. \cite{ADP}, page 596) yield that the unit ball of
$H_{l},$ denoted by $\overline{B}_{H_{l}}\left(  0,1\right)  $ is viable with
respect to the previous equation. Then, from Theorem 1, for every $h>0$, there
exists $p_{l,m,h}\in L^{2}\left(  \Omega,\mathcal{F}_{t+h}^{1,m}%
,P;H_{l}\right)  \subset$ $L^{2}\left(  \Omega,\mathcal{F}_{t+h},P;H\right)  $
such that
\begin{align}
&  S\left(  h\right)  J_{l}E^{\mathcal{F}_{\infty}^{1,m}}\left[  \xi\right]
+\int_{t}^{t+h}S\left(  t+h-s\right)  J_{l}F\left(  J_{l}E^{\mathcal{F}%
_{\infty}^{1,m}}\left[  \xi\right]  \right)  ds\nonumber\\
&  +\int_{t}^{t+h}S\left(  t+h-s\right)  J_{l}G\left(  J_{l}E^{\mathcal{F}%
_{\infty}^{1,m}}\left[  \xi\right]  \right)  dW_{s}^{m}+\sqrt{h}p_{l,m,h}%
\in\overline{B}\left(  0,1\right)  , \label{r1}%
\end{align}
$dP-a.s.$ Moreover (see inequality (\ref{condp})),
\begin{align}
&  E\left[  \left\vert p_{l,m,h}\right\vert ^{2}\right]  +\frac{1}{h}E\left[
\left\vert E^{\mathcal{F}_{t}}\left[  p_{l,m,h}\right]  \right\vert
^{2}\right] \nonumber\\
&  \leq C\left(  \sup_{s\in\left[  0,h\right]  }E\left[  \left\vert \left(
S\left(  s\right)  -I\right)  J_{l}E^{\mathcal{F}_{\infty}^{1,m}}\left[
\xi\right]  \right\vert ^{2}\right]  +h\right) \nonumber\\
&  \leq C\left(  \sup_{s\in\left[  0,h\right]  }E\left[  \left\vert \left(
S\left(  s\right)  -I\right)  \xi\right\vert ^{2}\right]  +h\right)  .
\label{r2'}%
\end{align}
A careful look at the proof of Theorem 1 shows that $C$ can be chosen
independent of $l$ and $m$. The inequality (\ref{r2'}) guarantees the
existence of some subsequence $\left(  p_{l,m,h}\right)  _{l,m}$ converging
weakly in $L^{2}\left(  \Omega,\mathcal{F}_{t+h},P;H\right)  $ to some
$p_{h}\in L^{2}\left(  \Omega,\mathcal{F}_{t+h},P;H\right)  $ such that
\begin{equation}
E\left[  \left\vert p_{h}\right\vert ^{2}\right]  +\frac{1}{h}E\left[
\left\vert E^{\mathcal{F}_{t}}\left[  p_{h}\right]  \right\vert ^{2}\right]
\leq C\left(  E\left[  \sup_{s\in\left[  0,h\right]  }\left\vert S\left(
s\right)  \xi-\xi\right\vert ^{2}\right]  +h\right)  . \label{r3}%
\end{equation}
Then, by taking weak limit in $L^{2}\left(  \Omega,\mathcal{F}_{t+h}%
,P;H\right)  $ in (\ref{r1}), one gets
\begin{align}
S\left(  h\right)  \xi &  +\int_{t}^{t+h}S\left(  t+h-s\right)  F\left(
\xi\right)  ds\nonumber\\
&  +\int_{t}^{t+h}S\left(  t+h-s\right)  G\left(  \xi\right)  dW_{s}+\sqrt
{h}p_{h}\in\overline{B}\left(  0,1\right)  . \label{r4}%
\end{align}
We recall that $t,\xi,h$ are arbitrary and the constant $C$ in (\ref{r3}) can
be chosen independent of $h.$ Then (\ref{r4}) and (\ref{r3}) give the $0$
quasi-tangency condition. The viability of the initial system follows from
Theorem 2.
\end{proof}

\end{document}